\documentclass[12pt,twoside]{article}
\usepackage{bbm}
\usepackage{bm}
\usepackage{cite}
\usepackage{psfrag}
\usepackage{ifpdf}
\usepackage{multicol,graphics}
\usepackage{subfigure}
\usepackage[dvips]{graphicx}
\usepackage{color}% Ö§³Ö²ÊÉ«
\usepackage{epsfig}% epsͼÏñ
\usepackage{epstopdf}
\usepackage{float}
\usepackage{dsfont}
\usepackage{enumerate}
\usepackage[below]{placeins}
\usepackage[
  bookmarks=true,
  bookmarksopen=true,
  breaklinks=true,
  colorlinks=true,
  linkcolor=blue,
  anchorcolor=blue,
  citecolor=blue,
  filecolor=blue,
  menucolor=blue,
  %pagecolor=blue,
  urlcolor=blue
  ]{hyperref}
\usepackage{amsmath,amssymb,amsthm,mathrsfs,mathtools}

\newcommand{\norm}[1]{\displaystyle \left\| #1 \right\|}

\DeclareMathOperator{\dive}{div}
\DeclareMathOperator{\card}{card}
\allowdisplaybreaks[3]
\setlength{\paperwidth}{8.5in}
\setlength{\paperheight}{11.0in}
\setlength{\textwidth}{6.35in}
\setlength{\textheight}{9.3in}
\setlength{\oddsidemargin}{0.1in}
\setlength{\evensidemargin}{0.1in}
\setlength{\topmargin}{0in}
\setlength{\headsep}{0in}
\setlength{\headheight}{0.0in}
\setlength{\marginparwidth}{0in}
\setlength{\marginparsep}{0in}
\setlength{\parskip}{0em}

\newtheorem{theorem}{Theorem}[section]
\newtheorem{lemma}[theorem]{Lemma}

\newtheorem{proposition}[theorem]{Proposition}

\newtheorem{definition}{Definition}[section]
\newtheorem{remark}{Remark}[section]

\newcounter{RomanNumber}

\numberwithin{equation}{section}
\bibliographystyle{amsplain}
\begin{document}
\title{\textbf{\Large Existence of multi-dimensional pulsating fronts for KPP equations: a new formulation approach\\ }}
\author{Liangliang Deng\textsuperscript{a,b,}\footnote{The research of this author was supported by the China Scholarship Council.},
Arnaud Ducrot\textsuperscript{b,}\footnote{Corresponding author. E-mail address: arnaud.ducrot@univ-lehavre.fr}
 \\ \small \textsuperscript{a} School of Mathematics and Statistics, Lanzhou University, Lanzhou, Gansu 730000, China\\
 \small \textsuperscript{b} Normandie Univ, UNIHAVRE, LMAH, FR-CNRS-3335, ISCN, 76600 Le Havre, France}
\maketitle
\begin{abstract}
This paper is concerned with the existence of pulsating travelling fronts for a KPP reaction-diffusion equation posed in a multi-dimensional periodic medium.
We provide an alternative proof of the classic existence result. Our proof relies largely on the construction of a wave profile under a moving frame,
which avoids many technical difficulties in dealing with degenerate elliptic equations.
Intriguingly, our analysis also yields that the profile of the front propagating along each rational direction in $\mathbb{S}^{N-1}$ is periodic in time.

\textbf{Keywords:}
Reaction-diffusion equations;
Pulsating travelling fronts;
Orthogonal transformation;
Rational direction of propagation

\textbf{AMS Subject Classification (2020):} 35K58; 35B10; 35C07; 35K57
\end{abstract}

\section{Introduction}\label{introduction}
In this paper, we consider a heterogeneous reaction-diffusion equation of the form
\begin{align}\label{general-equation}
u_t-\dive(A(x)\nabla u)=f(x,u),~~t\in\mathbb{R},~x\in\mathbb{R}^N,
\end{align}
where $N$ is some given positive integer and without loss of generality, up to some change of variables, the spatial heterogeneities are assumed to be $\mathbb{Z}^{N}$-periodic.
Let us make the mathematical assumptions more precise.
We denote by $\mathbb{T}^{N}=\mathbb{R}^N/\mathbb{Z}^N$ the $N$-dimensional unit torus.
Suppose that the diffusion matrix field $A:\mathbb{T}^{N}\to\mathcal{S}_{N}(\mathbb{R})$ is of the class $C^{1+\alpha}$ for some exponent $\alpha\in(0,1)$ and uniformly elliptic in the sense that there exist two positive constants $0<\gamma\leq\Gamma$ such that
\begin{equation}\label{elliptic-condition}
\gamma\|\xi\|^2\leq\xi^{T}A(x)\xi
\leq\Gamma\|\xi\|^2,~~\forall (x,\xi)\in\mathbb{T}^{N}\times\mathbb{R}^{N}.
\end{equation}
Moreover, we assume that the nonlinearity $f:\mathbb{T}^{N}\times\mathbb{R}_+\to\mathbb{R}$ is continuous, of class $C^{\alpha}$ in $x$ locally uniformly in $u\in\mathbb{R}_+$ and of class $C^{1}$ in $u$ uniformly with respect to $x\in\mathbb{T}^{N}$ and we set $f_{u}(x,0):=\lim_{s\to0^{+}}f(x,s)/s$.
Further, the following hypotheses are satisfied:
\begin{equation}\label{reaction-hypothesis}
\begin{dcases}
f\geq0\mbox{ is of class }C^{1+\theta}\mbox{ (for some }\theta>0)\mbox{ with respect to }u\mbox{ in }\mathbb{\mathbb{T}}^{N}\times[0,1],\\
\forall x\in\mathbb{T}^N,~~f(x,0)=f(x,1)=0,\\
\forall x\in\mathbb{T}^N,~~s\mapsto\frac{f(x,s)}{s}\mbox{ is decreasing in } s>0.
\end{dcases}
\end{equation}
We remark that the last assumption in \eqref{reaction-hypothesis} implies that $f$ is of KPP type, that is,
$$
\forall (x,s)\in\mathbb{T}^N\times(0,1),~~
0<f(x,s)\leq f_u(x,0)s.
$$
Furthermore, one has $f_u(\cdot,1)<0$ and $f(x,s)<0$ for any $x\in\mathbb{T}^N$ and for all $s>1$. Notice that if \eqref{general-equation} admits a periodic stationary state $0\leq p(x)\leq1$ for all $x\in\mathbb{T}^N$, it then follows from the strong maximum principle and $f(\cdot,p)\geq0$ that $p(x)\equiv0$ or $p(x)\equiv1$.

Equations of the type \eqref{general-equation} arise in population genetics, the combustion theory and in spatial ecology (see \cite{SK-book} for instance). The archetype of such reaction-diffusion equations reads as the well-known homogeneous equation of the form
$$
u_t-\Delta u=u(1-u)~~~\mbox{ in }\mathbb{R}^N,
$$
which was introduced in the pioneering papers of Fisher \cite{F1937} and of Kolmogorov, Petrovsky and Piskunov \cite{KPP1937}.
The propagation phenomena of such a homogeneous equation have been widely studied in the literatures.
This includes in particular the so-called \emph{planar} travelling fronts, connecting $0$ and $1$, which are solutions of the form $u(t,x)=U(x\cdot e-ct)$.
This means that this particular solution propagates in a given direction $e\in\mathbb{S}^{N-1}$ (the unit sphere of $\mathbb R^N$) with a constant speed $c$, and that its profile is invariant in this moving frame.
We refer to Aronson and Weinberger \cite{AW1978} for more results about travelling fronts and spreading properties for some solutions to the Cauchy problem.

In the recent decades, much more attentions have been paid to the study of the propagation phenomena for reaction-diffusion problems posed in some heterogeneous media, typically of the type \eqref{general-equation}.
It can be traced back to the work of Freidlin and G\"{a}rtner \cite{FG1979}, who used a stochastic method to study the spreading properties for Fisher-KPP equations in the one-dimensional periodic environments.
Shigesada, Kawasaki and Teramoto \cite{SKT1986} first defined the notion of \emph{pulsating travelling fronts} (see also \cite{SK-book}),
which extends those planar fronts to spatially periodic environments.
They carried out some formal arguments and numerical simulations to study the critical fronts for one-dimensional Fisher-KPP equations,
where mobilities and nonlinearities vary with alternatively aligned patches. In general, we have
\begin{definition}[\textbf{Pulsating travelling front}]
\label{PTF-definition}\upshape
Let $e\in\mathbb{S}^{N-1}$ be an arbitrarily given vector.
An entire (classical) solution $u\equiv u(t,x)$ of \eqref{general-equation} is said be a \emph{pulsating travelling front} solution of \eqref{general-equation} connecting $0$ and $1$,
propagating in the direction $e$ with the effective speed $c\neq0$, if it satisfies
\begin{equation}\label{PTF}
 \begin{dcases}
  u\left(t+\frac{k\cdot e}{c},x\right)=u(t,x-k),~~\forall
   (t,x,k)\in\mathbb{R}\times\mathbb{R}^{N}\times\mathbb{Z}^{N},\\
   \lim\limits_{r\to+\infty}u(t,re+y)=0~\mbox{ and }
   \lim\limits_{r\to-\infty}u(t,re+y)=1,
 \end{dcases}
\end{equation}
where $y\in e^\perp:=\{\eta\in\mathbb{R}^{N}\mid \eta\cdot e=0\}$ and the above limits hold locally uniformly for $t\in\mathbb{R}$ and uniformly with respect to $y\in e^\bot$.
\end{definition}
It is easy to check that a \emph{pulsating travelling front} $(c,u)$ of \eqref{general-equation} propagating in the direction $e\in\mathbb{S}^{N-1}$ with speed $c\neq0$ can be equivalently written as $u(t,x)=U(s,x)$ with $s=x\cdot e-ct\in\mathbb{R}$, where the profile $U\colon\mathbb{R}\times\mathbb{T}^{N}\to\mathbb{R}$ satisfies the following semi-linear degenerate elliptic equation
\begin{align}\label{degenerate-elliptic-equation}
 \left(e\partial_s+\nabla_x\right)^{T}\left[A(x)\left(e\partial_s+
\nabla_x\right)U\right]+c\partial_s U
+f(x,U)=0,~~\forall(s,x)\in\mathbb{R}\times\mathbb{T}^{N},
\end{align}
as well as the asymptotic conditions
$$
 \lim_{s\to+\infty}U(s,x)=0~\mbox{ and }\lim_{s\to-\infty}U(s,x)=1
 \mbox{ uniformly for }x\in\mathbb{T}^{N}.
$$
Such a wave profile was first constructed by Xin \cite{X1991,X1992} in the framework of flame propagation. It is also called a \emph{periodically varying wavefront} in \cite{HZ1995}. Now it is widely adopted as the pulsating travelling wave in spatially periodic media (see \cite{BH2002,BHN2005,BHR2005-2,D2016,Hamel2008,LZ2010,W2002} for various type of nonlinearities and references therein).

Before going further let us outline some methods in the literatures to prove the existence of pulsating travelling fronts for Problem \eqref{general-equation} with monostable nonlinearities:

The first method is to solve the semi-linear degenerate elliptic equation \eqref{degenerate-elliptic-equation} using a regularization procedure. This approach was developed by Berestycki and Hamel \cite{BH2002} for the equation \eqref{general-equation} with an advection under a more general periodic framework (see also Berestycki, Hamel and Roques \cite{BHR2005-2}).
The authors first proved the existence of the solution $(c^{\varepsilon},U^{\varepsilon})$ for the regularized problem in  a cylinder with periodic boundary conditions:
\begin{equation*}
  \begin{cases}
   L_{\varepsilon}U+c\partial_s U+f(x,U)=0
   ~\mbox{ in }\mathbb{R}\times\mathbb{T}^{N}, \\
   \forall x\in\mathbb{T}^{N}, ~~U(-\infty,x)=1,~ U(\infty,x)=0
  \end{cases}
\end{equation*}
with
$$
L_{\varepsilon}U:=\varepsilon U_{ss}+\left(e\partial_s+\nabla_x\right)^{T}\left[A(x)\left(e\partial_s+
\nabla_x\right)U\right]\text{ for }\varepsilon>0.
$$
In the next step, they removed the elliptic regularization by passing to the limit $\varepsilon\to0$.
This approach requires refined estimates of the solution independent of $\varepsilon\ll 1$ small enough.

The second method we can mention is a general theory for the spreading speeds and the pulsating travelling waves developed by Weinberger \cite{W2002} using dynamical system arguments.
He proved the existence of spreading speed and its coincidence with the minimal speed of travelling waves for a recursion equation governed by an order-preserving compact operator of monostable type in a multi-dimensional periodic habitat. Liang and Zhao \cite{LZ2010} further generalized this theory to the abstract monotone semiflows with weak ($\alpha$-contraction) compactness.
These abstract results can be applied to various monotone systems posed on periodic environments to describe the spreading speed properties and to ensure the existence of pulsating fronts.

A third method we can also mention is based on the intersection number argument for one-dimensional reaction-diffusion equations, for which we refer to \cite{Angenent, Matano}.
Such a method has been proposed in \cite{DGM2014} and further developed in \cite{GM2020}.
Under weak assumptions, it is in particular proved that the solution starting from Heaviside initial data is steeper than any entire solution.
This allows the authors to obtain the existence of pulsating fronts for monostable equations while bistable and combustion cases are also considered.
The approach has also been extended by Nadin \cite{N2015} for some problems posed on a general heterogeneous medium, namely without any periodic assumptions.

Let us finally mention an other method recently developed by Griette and Matano in \cite{Griette-Matano-2021} for an one-dimensional spatially periodic reaction-diffusion system with hybrid nonlinearity.
For such systems, pulsating waves with speed $c>0$ are reformulated as a fixed point of the time $T=\frac{1}{c}$ Poincar\'e map composed with a spatial shift corresponding to the period of the environment. This fixed point equation is roughly solved using the Schauder fixed point theorem.
This methodology seems to be well adapted to handle the existence of pulsating waves in one-dimensional environments. The case of multi-dimensional media seems to be more complicated to deal with using such an approach.

In this work we aim to devise another approach to discuss the existence of pulsating fronts for multi-dimensional reaction-diffusion equations.
To illustrate our method, we consider a periodic Fisher-KPP reaction-diffusion equation, as in \eqref{general-equation} and we provide a new proof for the existence of the multi-dimensional pulsating travelling fronts.
Instead of working with a degenerate elliptic equation for the wave profile (as in \eqref{degenerate-elliptic-equation}), we propose an other formulation of this profile that satisfies a nondegenerate and parabolic equation, with periodicity (both in time and space) conditions for a dense set of directions.
Hence the difficulties caused by the elliptic degeneracy are successfully overcome for a dense subset of direction of propagation, while the case of a general direction of propagation is obtained by limiting arguments.
As far as we know, the existence problem of multi-dimensional pulsating travelling waves is scarcely studied for spatially periodic reaction-diffusion systems of the epidemic or prey-predator types.
In our forthcoming paper \cite{forthcoming}, we will use the methodology developed in this note to treat this issue for some epidemic systems with diffusion in a periodic medium.

Finally, let us mention that the notion of pulsating travelling front in spatially periodic media has been extended to more general (space and time) heterogeneous media.
One may refer to the notion of generalized pulsating travelling fronts introduced by Nolen et al. \cite{NRX2005} and Nadin \cite{N2009} or almost pulsating waves by Fang et al. \cite{FYZ2017} for monostable equations in space-time periodic media.
We also mention that another definition of travelling waves for nonlocal-dispersal monostable equations was introduce by Shen and Zhang \cite{SZ2012} in spatially periodic media and by Rawal et al. \cite{RSZ2015} in space-time periodic media.
We refer to Matano \cite{Matano03} for a definition of generalized travelling waves in a general random medium and to
Berestycki and Hamel \cite{BH2012} who introduced the notion of generalized transition waves for very general heterogeneous reaction-diffusion-advection equations.

The work is organized as follows:
Section \ref{new-moving-frame} gives a precise description of our methodology.
In Section \ref{preliminaries}, we recall some properties of a periodic elliptic eigenvalue problem.
Section \ref{proof} is concerned with the proof of existence results, which involves the building of an invariant domain, the derivation of a space-time periodic problem, rational approximation to any directions of propagation and the existence of the minimal wave speed.

\section{Description of a new wave profile}\label{new-moving-frame}
The aim of this section is to elaborate on our methodology. Assume that an entire solution $u=u(t,x)$ of the following equation
\begin{align}\label{periodic-KPP-equation}
u_t-\dive(A(x)\nabla u)=f(x,u),~~t\in \mathbb R,\;x\in\mathbb{R}^{N}
\end{align}
is a pulsating travelling front of \eqref{periodic-KPP-equation} propagating in a given direction $e\in\mathbb{S}^{N-1}$ with the effective speed $c\neq0$ according to Definition \ref{PTF-definition}.
We denote by $\{e_1,\cdots,e_N\}$ the canonical basis of $\mathbb R^N$ and we consider a linear orthogonal transformation $R\in\mathcal{O}(\mathbb{R}^{N})$ (the corresponding matrix representation still uses the same notation below) such that
\begin{equation*}
\begin{split}
&Re_1=e\text{ and }\\
&e^\perp={\rm span}\;\left\{Re_2,\cdots,Re_N\right\}=\left\{\eta\in\mathbb{R}^{N}\mid
\eta=R(0,y)^{T},\forall y\in\mathbb{R}^{N-1}\right\}.
\end{split}
\end{equation*}
For any $k\in\mathbb{Z}^{N}$, we decompose $k$ along $e$ and $e^\perp$, using the following notations
$$
k=(k\cdot e)e+k_\perp~\text{ with $k_\perp=k-(k\cdot e)e\in e^\perp$},
$$
so that
$$
R^{-1}k=
\begin{pmatrix}
k\cdot e\\
0_{\mathbb{R}^{N-1}}
\end{pmatrix}
+R^{-1}k_\perp
~\text{ and }~
R^{-1}k_\perp=
\begin{pmatrix}
 0\\
 \hat R k_\perp
\end{pmatrix}
\in \bigoplus_{i=2}^N \mathbb R e_i=\{0\}\times\mathbb R^{N-1}
$$
for some linear map $\hat R:e^\perp\to\mathbb{R}^{N-1}$. Now we consider the frame moving with the speed $c$ and we define the function $\varphi=\varphi(\xi,t,y)$ for $(\xi,t,y)\in
\mathbb{R}\times\mathbb{R}\times\mathbb{R}^{N-1}$ by
\begin{equation}\label{def-phi}
\varphi(\xi,t,y)=u(t,x)~\text{ with }x=R\begin{pmatrix}\xi+ct\\ y\end{pmatrix}.
\end{equation}
From the pulsating condition of $u$ in \eqref{PTF}, one has for all $k\in \mathbb Z^N$
\begin{align*}
 \varphi(\xi,t,y)&=u\left(t+\frac{k\cdot e}{c}, x+k\right)\\
&=u\left(t+\frac{k\cdot e}{c}, R\left((\xi+ct,y)^T+R^{-1}k\right)\right)\\
&=u\left(t+\frac{k\cdot e}{c}, R\left(\left(\xi+c\left(t+\frac{k\cdot e}{c}\right),y\right)^T+R^{-1}k_\perp\right)\right)\\
&=\varphi\left(\xi,t+\frac{k\cdot e}{c},y+\hat{R}k_\perp\right).
\end{align*}
Hence $\varphi$ satisfies what we shall call below the \emph{$R-$pulsating condition}
\begin{equation}\label{R-pulsating}
\varphi(\xi,t,y)=\varphi\left(\xi,t+\frac{k\cdot e}{c},y+\hat{R}k_\perp\right),\;\forall (\xi,t,y)\in
\mathbb{R}\times\mathbb{R}\times\mathbb{R}^{N-1},\;\forall k\in \mathbb Z^N.
\end{equation}
Next, recalling the definition of $\varphi$ in \eqref{def-phi} one has
$$
\frac{\partial u}{\partial x_i}(t,x)=\frac{\partial\varphi}{\partial \xi}
\frac{\partial \xi}{\partial x_i}+\sum^{N-1}_{j=1}\frac{\partial\varphi}{\partial y_j}
\frac{\partial y_j}{\partial x_i}=\nabla_{(\xi,y)}\varphi\cdot\frac{\partial}
{\partial x_i}
\begin{pmatrix}{\xi}\\{y}\end{pmatrix}
=\nabla_{(\xi,y)}\varphi(\xi,t,y)\cdot(R^{-1}e_i),
$$
and
$$
\frac{\partial^{2}u}{\partial x_i\partial x_j}(t,x)=D^2_{(\xi,y)}\varphi(\xi,t,y)\cdot (R^{-1}e_i)(R^{-1}e_j).
$$
To go further in this computation, we use a single variable $z=(z_1,\ldots,z_N)\in\mathbb{R}^N$ instead of $(\xi,y)\in\mathbb{R}\times \mathbb R^{N-1}$ and we set
\begin{align}\label{single-variable}
\tilde{\varphi}(t,z)=\varphi(\xi,t,y)~~\text{ with }
z_1=\xi,~(z_2,\ldots,z_{N})=y.
\end{align}
Using this notation we have
$$
\frac{\partial^{2}u}{\partial x_i\partial x_j}(t,x)=\sum_{\ell,k=1}^{N}
\frac{\partial^{2}\tilde \varphi}{\partial z_{\ell}\partial z_k}
\left(R^{-1}e_i\right)_k\left(R^{-1}e_j\right)_{\ell},
$$
wherein $\left(R^{-1}e_i\right)_k$ and $\left(R^{-1}e_j\right)_{\ell}$ denote the coordinate of the vectors $R^{-1}e_i$ and $R^{-1}e_j$ in the canonical basis of $\mathbb R^N$, respectively.
Further, we set
$$
\widetilde{A}(\xi,t,y)=\left(\tilde{a}_{ij}(\xi,t,y)\right)
=R A\left(R(\xi+ct,y)^{T}\right)R^T.
$$
Since $R\in\mathcal{O}(\mathbb{R}^{N})$ and the diffusion matrix field $A(x)=\left(a_{ij}(x)\right)_{1\leq i,j\leq N}$ is symmetric and uniformly elliptic in $\mathbb{T}^N$, so is $\widetilde{A}$. Moreover, let us write
$$
R=\begin{pmatrix}L_1\\\vdots \\L_{N}\end{pmatrix}\mbox{ and }~ R^{-1}=R^T=\left(L_1^T,\ldots,L_{N}^T\right),
$$
where $L_1$, ..., $L_N$ are the lines of the matrix $R$, so that $L_1^T$, ..., $L_N^T$ are the columns of $R^{-1}=R^T$.
Next, we have
\begin{align*}
 I:=\sum_{i,j=1}^{N}a_{ij}(x)
 \frac{\partial^{2}u}{\partial x_i\partial x_j}
 &=\sum_{i,j=1}^{N}a_{ij}\left(R(\xi+ct,y)^{T}\right)\sum^{N}_{\ell,k=1}
 \frac{\partial^{2}\tilde \varphi}
{\partial z_{\ell}\partial z_k}\left(R^{-1}e_i\right)_{k}
\left(R^{-1}e_j\right)_{\ell}.
\end{align*}
Here $\left(R^{-1}e_i\right)_{k}$ denotes the $k^{\rm th}$ component of the vector $R^{-1}e_i=L_i^T$.
Hence one has
\begin{align*}
I&=\sum^{N}_{\ell,k=1}\frac{\partial^{2}\tilde{\varphi}}
{\partial z_{\ell}\partial z_k}
\sum^{N}_{i,j=1}a_{ij}\left(R(\xi+ct,y)^{T}\right)
\left(R^{-1}e_i\right)_{k}\left(R^{-1}e_j\right)_{\ell}\\
&=\sum^{N}_{\ell,k=1}\left(L_{\ell}A\left(R(\xi+ct,y)^{T}\right)L_k^T\right)
\frac{\partial^{2}\tilde{\varphi}}{\partial z_{\ell}\partial z_k}\\
&=\sum^{N}_{\ell,k=1}\left(e_{\ell}^T R A\left(R(\xi+ct,y)^{T}\right)
R^Te_k\right)\frac{\partial^{2}\tilde{\varphi}}
{\partial z_{\ell}\partial z_k}\\
&=\sum^{N}_{\ell,k=1}\tilde{a}_{{\ell}k}(\xi,t,y)
\frac{\partial^{2}\tilde{\varphi}}{\partial z_{\ell}\partial z_k},
\end{align*}
while
\begin{align*}
 J:=\sum_{i,j=1}^{N}\frac{\partial a_{ij}}{\partial x_j}
 \frac{\partial u}{\partial x_i}
 &=\sum_{i,j=1}^{N}\sum_{k=1}^{N}L_i\frac{\partial\widetilde{A}}
{\partial z_k}L_j\left(R^{-1}e_j\right)_{k}
\sum^{N}_{\ell=1}\frac{\partial\tilde{\varphi}}{\partial z_{\ell}}
\left(R^{-1}e_i\right)_{\ell}\\
&=\sum_{i,j=1}^{N}\sum_{k=1}^{N}\sum_{m,n=1}^{N}\frac{\partial\tilde{a}_{mn}}
{\partial z_k}(e_iR)_m(e_jR)_n\left(R^{-1}e_j\right)_{k}
\sum^{N}_{\ell=1}\frac{\partial\tilde{\varphi}}{\partial z_{\ell}}
\left(R^{-1}e_i\right)_{\ell}\\
&=\sum_{\ell,k=1}^{N}\frac{\partial\tilde{\varphi}}{\partial z_{\ell}}
\sum_{m,n=1}^{N}\frac{\partial\tilde{a}_{mn}}{\partial z_k}
\sum_{i,j=1}^{N}(e_iR)_m(e_jR)_n\left(R^{-1}e_j\right)_{k}
\left(R^{-1}e_i\right)_{\ell}\\
&=\sum_{\ell,k=1}^{N}\frac{\partial\tilde{\varphi}}{\partial z_{\ell}}
\sum_{m,n=1}^{N}\frac{\partial\tilde{a}_{mn}}{\partial z_k}
\langle L_m,L_{\ell}\rangle\langle L_n,L_k\rangle.
\end{align*}
Since the matrix $R$ is orthogonal, its lines are orthonormal so that for all $m,\,\ell\in\{1,\ldots,N\}$, one has
$$
\langle L_m,L_{\ell}\rangle=\delta_{m{\ell}}=
\begin{cases}
1~~\text{ if $m=\ell$,}\\
0~~\text{ else.}
\end{cases}
$$
Thus we get
\begin{align*}
J&=\sum_{\ell,k=1}^{N}\frac{\partial\tilde{\varphi}}{\partial z_{\ell}}
\sum_{m,n=1}^{N}\frac{\partial\tilde{a}_{mn}}{\partial z_k}
\delta_{m{\ell}}\delta_{nk}=\sum_{\ell,k=1}^{N}
\frac{\partial\tilde{\varphi}}{\partial z_{\ell}}
\frac{\partial\tilde{a}_{{\ell}k}}{\partial z_k}.
\end{align*}
Recalling $z=(\xi,y)\in\mathbb{R}\times\mathbb{R}^{N-1}$ in \eqref{single-variable}, the two formulae above imply that
$$
\dive\left(A(x)\nabla u\right)(t,x)=I+J=
\dive_{\xi,y}\big(\widetilde{A}(\xi,t,y)\nabla_{\xi,y}\varphi\big)(\xi,t,y).
$$
Consequently, the function $\varphi=\varphi(\xi,t,y)$ defined in \eqref{def-phi} becomes a solution of the equation
\begin{align}\label{profile-equation}
\varphi_t-\dive_{\xi,y}\big(\widetilde{A}(\xi,t,y)\nabla_{\xi,y}\varphi\big)
-c\varphi_{\xi}=\widetilde{f}(\xi,t,y,\varphi)
\end{align}
posed for $(\xi,t,y)\in\mathbb{R}\times\mathbb{R}\times\mathbb{R}^{N-1}$, wherein the diffusion matrix field $\widetilde{A}$ is given by
$$
\widetilde{A}(\xi,t,y)=\left(\tilde{a}_{ij}(\xi,t,y)\right)
=RA\left(R(\xi+ct,y)^{T}\right)R^T,
$$
and we have defined the nonlinear function $\widetilde f$ by
$$
\widetilde{f}(\xi,t,y,\varphi)=f\left(R(\xi+ct,y)^{T},\varphi\right).
$$
Therefore, when $u\equiv u(t,x)$ is a pulsating travelling front of \eqref{periodic-KPP-equation} in the direction $e\in \mathbb S^{N-1}$ with the speed $c>0$, the function $\varphi$ given by
\begin{align*}
  \varphi(\xi,t,y)=u\left(t,R(\xi+ct,y)^{T}\right),~~(\xi,t,y)\in
\mathbb{R}\times\mathbb{R}\times\mathbb{R}^{N-1}
\end{align*}
becomes an entire solution of \eqref{profile-equation} and the pulsating condition for $u$ in \eqref{PTF} rewrites as the following property
\begin{align}\label{profile-periodicity}
 \varphi\left(\xi,t+\frac{k\cdot e}{c},y+\hat{R}k_\perp\right)= \varphi(\xi,t,y),~~\forall (\xi,t,y)\in\mathbb{R}\times\mathbb{R}
\times\mathbb{R}^{N-1},~\forall k\in\mathbb Z^N.
\end{align}
The above condition can be referred as the $R-$pulsating condition as in \eqref{R-pulsating}.

Note also that since $A$ and $f$ are both $\mathbb Z^N$-periodic in $x$, the same $R-$pulsating condition as \eqref{profile-periodicity} is shared by $\widetilde{A}$ and $\widetilde{f}$, namely for all $k\in\mathbb{Z}^{N}$ and for all $(\xi,t,y)\in\mathbb{R}\times\mathbb{R}\times\mathbb{R}^{N-1}$,
\begin{align*}
 &\tilde{a}_{ij}\left(\xi,t+\frac{k\cdot e}{c},y+\hat{R}k_\perp\right)
=\tilde{a}_{ij}(\xi,t,y),~~\forall\,1\leq i,j\leq N,\\
&\widetilde{f}\left(\xi,t+\frac{k\cdot e}{c},y+\hat{R}k_\perp,s\right)
=\widetilde{f}(\xi,t,y,s),~~\forall s\geq0.
\end{align*}
As a consequence of the above analyses, we conclude that $(c,u)$ is a pulsating travelling front connecting $0$ and $1$ of \eqref{periodic-KPP-equation} propagating in the direction $e\in\mathbb{S}^{N-1}$ if and only if $(c,\varphi)$ is a solution of \eqref{profile-equation} which satisfies \eqref{profile-periodicity} as well as the asymptotic conditions
\begin{align}\label{profile-limits}
 \lim_{\xi\to+\infty}\varphi(\xi,t,y)=0~\mbox{ and } \lim_{\xi\to-\infty}\varphi(\xi,t,y)=1
 \mbox{ uniformly for }(t,y)\in\mathbb{R}\times\mathbb{R}^{N-1}.
\end{align}
Roughly speaking, when $(c,u)$ is a pulsating travelling front of Problem \eqref{periodic-KPP-equation}, then $(c,\varphi)$ becomes a usual travelling wave solution of the uniformly parabolic problem \eqref{profile-equation} propagating inside the cylinder $\mathbb{R}\times\mathbb{R}^{N-1}$ with an infinite section.
\begin{remark}\label{profile-equivalence}
  The profile $(\xi,t,y)\in \mathbb{R}\times\mathbb{R}\times\mathbb{R}^{N-1}\mapsto\varphi(\xi,t,y)$ introduced above is also equivalent to the profile stated in Section \ref{introduction}. Indeed, one can easily check that
  $$
  \varphi(\xi, (ct,y))=\Phi\left(\xi, R(ct,y)^T\right),
  $$
  where the function $\Phi: \mathbb{R}\times\mathbb{R}^N\to\mathbb{R}$ is $\mathbb Z^N$-periodic in the second variable, namely
  \begin{align*}
    \Phi(\xi,X+k)=\Phi(\xi,X),~~\forall (\xi,X)\in\mathbb R\times \mathbb R^N,~ \forall k\in \mathbb Z^N.
  \end{align*}
\end{remark}
\begin{remark}\label{profile-not-periodic}
Note that under the frame moving with the speed $c$, an one-dimensional pulsating wave $(c,u)$ of \eqref{periodic-KPP-equation} is its an entire solution such that for all $\xi\in\mathbb{R}$, the function $t\mapsto u(t,\xi+ct)$ is $1/c$-periodic.
A special case we should mention is the work of Bages et al. in \cite{BMJ-TAMS} where the orthogonal transformation $R$ can be regarded as an identity transformation in $\mathbb{R}^N$, namely the pulsating front propagates along the direction $e_{1}=(1,0_{\mathbb{R}^{N-1}})$.
In this case, the profile of the front is still $1/c$-periodic in time, whence the existence results can be readily obtained with the help of the Poincar\'e map. However, it can be seen from \eqref{profile-periodicity} that such observations generally no longer hold for the multi-dimensional pulsating fronts propagating in an arbitrary direction in $\mathbb{S}^{N-1}$.
\end{remark}
Let us mention that our methodology can be extended to more general equations \eqref{periodic-KPP-equation} with a smooth advection $V:\mathbb{T}^N\to\mathbb{R}^N$. In this case, we set $\widetilde{V}(\xi,t,y)=V(R(\xi+ct,y)^{T})R$. Then, $\varphi$ satisfies the following parabolic equation
\begin{align}\label{advection}
\varphi_t-\dive_{\xi,y}\big(\widetilde{A}(\xi,t,y)\nabla_{\xi,y}\varphi\big)
+\big(\widetilde{V}(\xi,t,y)\cdot\nabla_{\xi,y}-c\partial_\xi\big)\varphi
=\widetilde{f}(\xi,t,y,\varphi).
\end{align}
In particular, this allows us to prove the existence of the pulsating front for equation \eqref{general-equation} with more general nonlinearities connecting $0$ and a unique positive stationary state $p\in C^2(\mathbb{T}^N)$ of \eqref{general-equation} (see \cite{BHR2005-2,W2002}). A typical example of such nonlinearities is
$$
g(x,u)=u(a(x)-u)
$$
wherein the function $a\in C^\alpha(\mathbb{T}^N)$ is not positive everywhere. However, without loss of generality, we can always work under the hypothesis \eqref{reaction-hypothesis}. In fact, if we set
$$
\widetilde{p}(\xi,t,y):=p(R(\xi+ct,y)^T)\mbox{ and }
\widetilde{g}(\xi,t,y,\varphi):=g(R(\xi+ct,y)^T,\varphi),~
R\begin{pmatrix}{\xi+ct}\\{y}\end{pmatrix}\in\mathbb{T}^N,
$$
and we write
$$
\psi(\xi,t,y)=\frac{\varphi(\xi,t,y)}{\widetilde{p}(\xi,t,y)},~~
\widetilde{f}(\xi,t,y,\psi)=\frac{1}{\widetilde{p}(\xi,t,y)}
\left[\widetilde{g}(\xi,t,y,\psi\widetilde{p})-\psi\widetilde{g}
(\xi,t,y,\widetilde{p})\right]
$$
and
$$
  \widetilde{V}(\xi,t,y)=-\frac{2}{\widetilde{p}(\xi,t,y)}
\left[\left(R^T\widetilde{A}(\xi,t,y)R\right)
\Big(R\nabla_{\xi,y}\widetilde{p}(\xi,t,y)\Big)\right]^TR,
$$
then the function $\psi$ satisfies the equation \eqref{advection} and the nonlinear function $f$ defined by $f(R(\xi+ct,y)^T,\psi):=\widetilde{f}(\xi,t,y,\psi)$ satisfies our hypothesis \eqref{reaction-hypothesis}.

In what follows we will adopt the methodology different from those in \cite{BH2002,BHR2005-2} and \cite{W2002} to prove the same results for KPP-type nonlinearity:
\begin{theorem}\label{main-result}
  Under the above assumptions on $A$ and $f$, for each $e\in\mathbb{S}^{N-1}$, there exists $c^*(e)>0$ such that for each $c\geq c^{*}(e)$, Problem \eqref{general-equation} has a pulsating travelling front solution $(c,u)$ propagating in the direction $e$ according to Definition \ref{PTF-definition}, while there exist no pulsating travelling fronts of speed $c$ for $c<c^*(e)$.
\end{theorem}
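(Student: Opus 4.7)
The plan is to construct, for each $e\in\mathbb{S}^{N-1}$, a function $\varphi$ solving the uniformly parabolic equation \eqref{profile-equation} together with the $R$-pulsating condition \eqref{profile-periodicity} and the asymptotic limits \eqref{profile-limits}; the pulsating front $u$ is then recovered via the change of frame of Section \ref{new-moving-frame}. Following the outline announced in the introduction, I would proceed in four steps: (i) an invariant-domain argument forcing $\varphi\in[0,1]$; (ii) existence for rational directions, where the $R$-pulsating condition reduces to periodicity on a compact quotient; (iii) rational approximation of an arbitrary direction; and (iv) identification of $c^*(e)$ as a sharp threshold.

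Call $e\in\mathbb{S}^{N-1}$ rational if it is proportional to some nonzero $k_0\in\mathbb{Z}^N$. For such $e$, the set
\begin{equation*}
\Lambda_c:=\left\{\left(\tfrac{k\cdot e}{c},\,\hat{R}k_\perp\right):k\in\mathbb{Z}^N\right\}
\end{equation*}
is a discrete cocompact lattice in $\mathbb{R}\times\mathbb{R}^{N-1}$, so \eqref{profile-periodicity} is exactly the periodicity of $\varphi$ on the torus $\mathbb{T}_c:=(\mathbb{R}\times\mathbb{R}^{N-1})/\Lambda_c$. Because the same periodicity is inherited by $\widetilde A$ and $\widetilde f$, equation \eqref{profile-equation} becomes a truly uniformly parabolic equation on the cylinder $\mathbb{R}_\xi\times\mathbb{T}_c$. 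I would linearize at $\varphi=0$, searching for $\varphi(\xi,t,y)=e^{-\lambda\xi}\psi(t,y)$ with $\psi>0$ and $\mathbb{T}_c$-periodic. This produces a family of principal eigenvalues $\mu(\lambda,c)$, whose properties are collected in Section \ref{preliminaries}, and I define
\begin{equation*}
c^*(e):=\inf\bigl\{c>0:\exists\,\lambda>0\text{ with }\mu(\lambda,c)=0\bigr\}.
\end{equation*}
For $c\geq c^*(e)$, existence in the rational case follows from monotone iteration (or Schauder's fixed point theorem) on the cylinder, sandwiched between the super-solution $\overline\varphi\equiv 1$ and a sub-solution $\underline\varphi=\max\{0,\,\varepsilon e^{-\lambda\xi}\psi\}$; the parabolic maximum principle produces the invariant domain $[0,1]$. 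The limit $\varphi\to 0$ at $\xi\to+\infty$ is read off the exponential barrier, while the limit $\to 1$ at $\xi\to-\infty$ is obtained from an $\omega$-limit argument: any accumulation point is a bounded $\mathbb{Z}^N$-periodic stationary solution of \eqref{general-equation} with values in $(0,1]$, hence identically $1$ by the remark following \eqref{reaction-hypothesis}. Non-existence for $c<c^*(e)$ is the standard KPP linearization estimate: a putative positive front tested against the dual eigenfunction would force $\mu(\lambda,c)\leq 0$ for some $\lambda>0$, contradicting the definition of $c^*(e)$.

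For a general $e\in\mathbb{S}^{N-1}$, I would approximate $e$ by rational directions $e_n\to e$ and apply the previous step to obtain fronts $(c_n,\varphi_n)$ with $c_n\geq c^*(e_n)$. After a $\xi$-translation normalizing $\max_{t,y}\varphi_n(0,t,y)=1/2$, uniform interior Schauder estimates, available thanks to the non-degeneracy of \eqref{profile-equation}, extract a subsequence converging locally uniformly to a limit $(c,\varphi)$ still solving \eqref{profile-equation}. The main obstacle of the whole argument sits here and is threefold: (a) showing that $c_n$ remains bounded above and below away from zero, so one may extract $c_n\to c^*(e)>0$; (b) showing that the $R_n$-pulsating conditions for $e_n$ pass to the $R$-pulsating condition for $e$, even though for an irrational direction the quotient $\mathbb{T}_{c_n}$ collapses and the projection of $\mathbb{Z}^N$ onto $e^\perp$ becomes dense; and (c) showing that the asymptotic limits \eqref{profile-limits} survive this degeneration of the geometry. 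I would handle (a) through two-sided bounds on $\mu(\lambda,c)$ that depend continuously on $e\in\mathbb{S}^{N-1}$, yielding continuous dependence of $c^*(\cdot)$ along rational directions. For (c), I would propagate the uniform exponential sub-solution $\varepsilon e^{-\lambda_n\xi}\psi_n$ and the constant super-solution $1$ through the limit, noting that the normalization prevents the limit from being identically $0$, and then argue that a bounded entire solution of \eqref{profile-equation} squeezed between such barriers must satisfy the correct limits at $\xi\to\pm\infty$.
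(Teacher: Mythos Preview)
Your overall architecture mirrors the paper's: rational directions first (where the $R$-pulsating condition becomes genuine periodicity on a compact quotient), then density of rational directions in $\mathbb{S}^{N-1}$, then a limit $c_n\searrow c^*(e)$ for the critical front. However, there is a genuine gap in the heart of your construction, namely in the sub/super-solution pair.

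Under the KPP hypothesis \eqref{reaction-hypothesis} one has $\widetilde f(\xi,t,y,s)\leq \widetilde f_\varphi(\xi,t,y,0)\,s$ for all $s\geq 0$. Hence any positive solution of the \emph{linearized} equation, in particular $e^{-\lambda\xi}\psi(t,y)$ with $\mu(\lambda,c)=0$, satisfies
\[
\mathcal{L}\bigl(e^{-\lambda\xi}\psi\bigr)=\widetilde f_\varphi(\cdot,0)\,e^{-\lambda\xi}\psi\;\geq\;\widetilde f\bigl(\cdot,e^{-\lambda\xi}\psi\bigr),
\]
which makes it a \emph{super}-solution of \eqref{profile-equation}, not a sub-solution. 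Multiplying by $\varepsilon>0$ does not reverse the inequality. Consequently your pair $(\overline\varphi,\underline\varphi)=(1,\max\{0,\varepsilon e^{-\lambda\xi}\psi\})$ is not ordered the right way for a monotone iteration: both $1$ and $\varepsilon e^{-\lambda\xi}\psi$ sit above the solution you are after, and you have no barrier from below that is positive anywhere. This also explains why your sentence ``the limit $\varphi\to 0$ at $\xi\to+\infty$ is read off the exponential barrier'' is inconsistent with your choice $\overline\varphi\equiv 1$: the exponential is your alleged sub-solution, which bounds $\varphi$ from below, not from above.

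The paper repairs exactly this point in Lemmas~\ref{subsolution}--\ref{supersolution}. The super-solution is $\overline\varphi=\min\{1,\Phi_{\lambda_c}(X)e^{-\lambda_c\xi}\}$, which does give the decay at $+\infty$. The sub-solution is the two-term expression
\[
\psi^{-}(\xi,t,y)=\Phi_{\lambda_{c}}(X)e^{-\lambda_{c}\xi}-K\,\Phi_{\lambda_{c}+\delta}(X)e^{-(\lambda_{c}+\delta)\xi},
\]
with $0<\delta<\min\{\theta\lambda_c,\lambda_c^+-\lambda_c\}$ and $K$ large; the verification uses the $C^{1+\theta}$ regularity of $f$ near $0$ through the inequality $\widetilde f(\cdot,s)\geq \widetilde f_\varphi(\cdot,0)s-\gamma s^{1+\theta}$, and crucially requires $c>c^*(e)$ so that $c(\lambda_c+\delta)-k_{\lambda_c+\delta}>0$. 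Without this second term there is no sub-solution available in the KPP setting, and the iteration scheme collapses. Once you insert the correct pair, the rest of your outline (strip problem, limit $a\to\infty$, rational approximation, $c_n\searrow c^*(e)$) is essentially the paper's route; note only that the limit $\varphi\to 1$ at $\xi\to-\infty$ still needs a separate argument (Proposition~\ref{limits} in the paper), since neither barrier forces positivity there.
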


The focus of the present paper is to provide a more concise proof of the existence result based on the new formulation of the wave profile introduced above.
The nonexistence with small speeds is a direct consequence of spreading properties for some solutions to the Cauchy problem associated with \eqref{general-equation} (see \cite{W2002} for instance).
One may also refer to \cite{BH2002,BHR2005-2} for other proofs of non-existence results.

As already underlined, the main difference with the method in \cite{BH2002,BHR2005-2} is that we directly work with an equivalent uniformly parabolic problem \eqref{profile-equation}-\eqref{profile-limits} instead of a degenerate elliptic equation derived from the change of variables $(t,x)\mapsto(x\cdot e-ct, x)$ ($c\neq0$). In particular, our proof indicates that although it is known that for each given $e\in\mathbb{S}^{N-1}$ the profile of the pulsating fronts is in general quasi-periodic in time, in the sense that the function $t\mapsto u(t,x+cte)$ is quasi-periodic for all $x\in\mathbb{R}^N$, there are still enough pulsating travelling fronts such that its profile is periodic in time.
\section{Preliminaries}\label{preliminaries}
In this section, we recall some important properties of the periodic principal eigenvalue for an elliptic eigenvalue problem. Consider the linearized equation of \eqref{profile-equation} around $0$
$$
\varphi_t-\dive_{\xi,y}\big(\widetilde{A}(\xi,t,y)\nabla_{\xi,y}\varphi\big)
-c\varphi_{\xi}=\widetilde{f}_\varphi(\xi,t,y,0)\varphi
$$
where $\widetilde{f}_\varphi(\xi,t,y,0)=f_u(R(\xi+ct,y)^{T},0)$. For clarity, we denote by
$$X:=R(\xi+ct,y)^{T}$$
 the generic element of $\mathbb{T}^{N}$. We look for a solution to the above equation of the form
$$
\varphi(\xi,t,y)=e^{-\lambda \xi}\Phi(X),~~X\in\mathbb{T}^{N}.
$$
Then the pair $(\lambda,\Phi)\in\mathbb{R}\times C^{2}(\mathbb{T}^{N})$ satisfies the following elliptic equation
\begin{align}\label{speed-problem}
  L_{\lambda}\psi=c\lambda\psi~~~\mbox{ in }\mathbb{T}^{N},
\end{align}
where $L_{\lambda}$ is the elliptic operator of the form
$$
L_{\lambda}\psi:=\dive\left(A(X)\nabla\psi\right)-2\lambda eA(X)\nabla\psi+\left[\lambda^{2}eA(X)e-\lambda\dive(A(X)e)
+f_u(X,0)\right]\psi
$$
with periodicity conditions. To solve \eqref{speed-problem}, let us consider for each $\lambda\in\mathbb{R}$ and each $e\in\mathbb{S}^{N-1}$ the principal eigenvalue problem:
\begin{equation}\label{periodic-eigenvalue-problem}
 \begin{cases}
   L_{\lambda}\Phi_{\lambda,e}=k_\lambda(e)\Phi_{\lambda,e}~\mbox{ in } \mathbb{T}^{N}, \\
   \Phi_{\lambda,e}\in C^{2}(\mathbb{T}^{N}),~~\Phi_{\lambda,e}>0.
 \end{cases}
\end{equation}
It is known that $k_\lambda(e)$ is an algebraically simple eigenvalue and the principal eigenfunction $\Phi_{\lambda,e}$ is unique up to multiplication (see \cite{Pinsky1995} for instance). Let $\Phi_{\lambda,e}\in C^{2}(\mathbb{T}^{N})$ be the unique principal eigenfunction of \eqref{periodic-eigenvalue-problem} such that
\begin{align}\label{normalization}
 \Phi_{\lambda,e}>0~\mbox{ and }~\|\Phi_{\lambda,e}\|_{\infty}=1.
\end{align}
A few properties of these eigenelements $\left(k_{\lambda}(e),\Phi_{\lambda,e}\right)$ are collected in the following:
\begin{proposition}\label{principal-eigenelements}
The following properties hold:
\begin{enumerate}[(i)]
\item For each $e\in\mathbb{S}^{N-1}$, $k_\lambda(e)$ is analytic, convex with respect to $\lambda\in\mathbb{R}$ and bounded below away from zero.
\item For each $e\in\mathbb{S}^{N-1}$, the function $\lambda\mapsto k_\lambda(e)/\lambda$ is continuous on $(0,\infty)$ and satisfies
  $$
  \lim\limits_{\lambda\to0^{+}}\frac{k_\lambda(\cdot)}{\lambda}=+\infty
  ~\mbox{ and }~
  \liminf\limits_{\lambda\to+\infty}\frac{k_\lambda(\cdot)}{\lambda}>0.
  $$
  Furthermore, it can reach the minimum on $(0,\infty)$.
\item For any $\lambda\in\mathbb{R}$, the function $e\in\mathbb{S}^{N-1}\mapsto k_\lambda(e)$ is continuous.  Moreover, the principal eigenfunction $\Phi_{\lambda,e}(X)$ depends continuously on both $\lambda\in\mathbb{R}$ and $e\in\mathbb{S}^{N-1}$ with the uniform topology.
  \end{enumerate}
\end{proposition}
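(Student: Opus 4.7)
The plan is to establish (i), (ii), (iii) in turn, using as common backbone the algebraic simplicity of $k_\lambda(e)$ (from Krein--Rutman applied to the compact inverse of a suitable shift of $L_\lambda$), Kato's analytic perturbation theory for closed operators of type (A), the polynomial dependence of $L_\lambda$ on $\lambda$ together with its continuous dependence on $e\in\mathbb{S}^{N-1}$, and the Cole--Hopf representation $\Phi_{\lambda,e}=e^{\psi_\lambda}$ of the principal eigenfunction.

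For (i), analyticity of $\lambda\mapsto k_\lambda(e)$ follows directly from Kato's theorem, since $\lambda\mapsto L_\lambda$ is polynomial and $k_\lambda(e)$ is simple. For convexity I would plug $\Phi_{\lambda,e}=e^{\psi_\lambda}$ with $\psi_\lambda\in C^2(\mathbb{T}^N)$ into the eigenvalue equation to obtain the pointwise identity
\begin{equation*}
k_\lambda(e)=\dive\bigl(A(X)\nabla\psi_\lambda\bigr)+(\nabla\psi_\lambda-\lambda e)^T A(X)(\nabla\psi_\lambda-\lambda e)-\lambda\dive(A(X)e)+f_u(X,0).
\end{equation*}
Testing with $\Phi_\theta:=\Phi_{\lambda_0,e}^{1-\theta}\Phi_{\lambda_1,e}^{\theta}$ and $\lambda_\theta:=(1-\theta)\lambda_0+\theta\lambda_1$, convexity of $q\mapsto q^T A(X)q$ together with the linearity of divergence produces the pointwise bound $L_{\lambda_\theta}\Phi_\theta/\Phi_\theta\leq(1-\theta)k_{\lambda_0}(e)+\theta k_{\lambda_1}(e)$, from which the Berestycki--Nirenberg--Varadhan characterization $k_{\lambda_\theta}(e)=\inf_{\phi>0}\sup_X(L_{\lambda_\theta}\phi/\phi)$ delivers convexity. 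For the uniform positive lower bound I would exploit that $\partial_\lambda k_\lambda(e)|_{\lambda=0}=0$: the Hellmann--Feynman formula and the self-adjointness of $L_0=\dive(A\nabla\cdot)+f_u(\cdot,0)$ give
\begin{equation*}
\left.\frac{dk_\lambda(e)}{d\lambda}\right|_{\lambda=0}=\frac{\int_{\mathbb{T}^N}\Phi_{0,e}\bigl(-2e\cdot A\nabla\Phi_{0,e}-\dive(Ae)\Phi_{0,e}\bigr)\,dX}{\int_{\mathbb{T}^N}\Phi_{0,e}^2\,dX}=-\frac{\int_{\mathbb{T}^N}\dive\bigl(Ae\,\Phi_{0,e}^2\bigr)\,dX}{\int_{\mathbb{T}^N}\Phi_{0,e}^2\,dX}=0
\end{equation*}
by the divergence theorem on the torus. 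Combined with the convexity of $\lambda\mapsto k_\lambda(e)$ and the Rayleigh estimate $k_0(e)\geq\int_{\mathbb{T}^N}f_u(X,0)\,dX>0$ (obtained by testing the self-adjoint Rayleigh quotient for $L_0$ with the constant function $1$ and using that $s\mapsto f(X,s)/s$ is strictly decreasing), this yields $k_\lambda(e)\geq k_0(e)>0$ for every $\lambda\in\mathbb{R}$.

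Part (ii) follows from (i): continuity of $\lambda\mapsto k_\lambda(e)/\lambda$ on $(0,\infty)$ is inherited from analyticity; the strict positivity $k_0(e)>0$ gives $\lim_{\lambda\to 0^+}k_\lambda(e)/\lambda=+\infty$; and the pointwise lower bound $k_\lambda(e)\geq\inf_X\bigl(\lambda^2 e^T A(X)e-\lambda\dive(A(X)e)+f_u(X,0)\bigr)\geq\gamma\lambda^2-C|\lambda|+\eta$ (obtained by evaluating the eigenvalue equation at a minimum of $\Phi_{\lambda,e}$) gives $\lim_{\lambda\to+\infty}k_\lambda(e)/\lambda=+\infty$, so this continuous positive function attains its (positive) minimum on $(0,\infty)$. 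For (iii), continuity of $e\mapsto k_\lambda(e)$ is again a consequence of Kato's theorem, since the coefficients of $L_\lambda$ depend polynomially on $e\in\mathbb{S}^{N-1}$, the eigenvalue is simple, and the resolvent of $L_\lambda$ is compact. For continuity of $e\mapsto\Phi_{\lambda,e}$ in the uniform topology I would extract from any sequence $e_n\to e$ a $C^2(\mathbb{T}^N)$-convergent subsequence of $(\Phi_{\lambda,e_n})$ via interior Schauder estimates and Arzel\`{a}--Ascoli; the limit is a positive classical eigenfunction of $L_\lambda$ with eigenvalue $k_\lambda(e)$, normalized by $\|\cdot\|_\infty=1$, and thus coincides with $\Phi_{\lambda,e}$ by uniqueness, forcing the whole family to converge.

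The main obstacle I foresee is securing the uniform positive lower bound in (i): the non-self-adjointness of $L_\lambda$ for $\lambda\neq 0$ obstructs a direct Rayleigh-quotient approach, so positivity has to be recovered indirectly by combining the Hellmann--Feynman/divergence-theorem computation identifying $\lambda=0$ as a critical point of $\lambda\mapsto k_\lambda(e)$, the convexity provided by the Cole--Hopf substitution, and the strict positivity $f_u(\cdot,0)>0$ inherited from the KPP assumption; without all three ingredients, convexity of $k_\lambda(e)$ alone would not rule out vanishing at its minimum.
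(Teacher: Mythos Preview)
The paper does not actually prove this proposition: immediately after stating Proposition~\ref{minimal-speed} it writes ``These results can be found in \cite{BH2002,BHN2005}\ldots'', so Proposition~\ref{principal-eigenelements} is quoted from the literature rather than established in the text. Your proposal is therefore not competing with a proof in the paper but supplying one, and the sketch you give is essentially the standard argument from those references: Kato perturbation for analyticity, the Cole--Hopf/log-substitution together with the min--max (supersolution) characterization for convexity, the Hellmann--Feynman computation showing $\partial_\lambda k_\lambda|_{\lambda=0}=0$ (using that $L_0$ is self-adjoint so the dual eigenfunction is $\Phi_{0,e}$ itself), and the Rayleigh test with $\phi\equiv 1$ for $k_0>0$. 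All of these steps are correct as outlined; in particular your divergence computation for $k_\lambda'(0)=0$ and the pointwise inequality $L_{\lambda_\theta}\Phi_\theta/\Phi_\theta\leq(1-\theta)k_{\lambda_0}+\theta k_{\lambda_1}$ both check out. One small remark: since $L_0$ does not depend on $e$, the bound $k_\lambda(e)\geq k_0>0$ that you obtain is automatically uniform in $e\in\mathbb{S}^{N-1}$, which is exactly the ``bounded below away from zero'' claimed in (i) and is also what feeds into the continuity of $c^*(e)$ later; you may want to make that uniformity explicit.
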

Thanks to this proposition, one can define the following two quantities
$$
c^{*}(e):=\min_{\lambda>0}\frac{k_\lambda(e)}{\lambda}>0~\mbox{ and }~
\lambda_c(e):=\min\{\lambda>0~|~ k_\lambda(e)-c\lambda=0\}.
$$
Furthermore, we have the following proposition.
\begin{proposition}\label{minimal-speed}
The following statements hold true:
\begin{enumerate}[(i)]
\item The equation
$$
k_\lambda(e)-c\lambda=0
$$
has solutions if and only if $c\geq c^{*}(e)$. Moreover, when $c>c^{*}(e)$, there are two positive solutions $\lambda^-_{c}<\lambda^+_{c}$ and when $c=c^{*}(e)$, there is the unique solution $\lambda^*>0$.
\item $c^{*}(e)$ is continuous with respect to $e\in\mathbb{S}^{N-1}$. Moreover, for any $c\geq c^{*}(e)$, $\lambda_c(e)$ is continuous and bounded with respect to $e\in\mathbb{S}^{N-1}$.
  \end{enumerate}
\end{proposition}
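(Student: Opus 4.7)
The plan is to analyze the zeros of $g(\lambda;e,c):=k_\lambda(e)-c\lambda$, which is convex and analytic in $\lambda$ (inheriting these properties from $k_\lambda(e)$ via Proposition~\ref{principal-eigenelements}(i)). The condition $k_\lambda(e)-c\lambda=0$ is equivalent to $h(\lambda):=k_\lambda(e)/\lambda=c$, so statement (i) is a study of the level sets of $h$, and statement (ii) will follow from the joint continuity of $h$ combined with uniform estimates in $e$.

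For statement (i), the direction $c<c^*(e)$ forbids solutions since then $h\geq c^*(e)>c$ on $(0,\infty)$ by definition of $c^*(e)$. For existence at $c\geq c^*(e)$, let $\lambda^*$ be a minimizer of $h$ provided by Proposition~\ref{principal-eigenelements}(ii); since $h(\lambda^*)=c^*(e)\leq c$ and $h(\lambda)\to+\infty$ as $\lambda\to 0^+$, the intermediate value theorem yields a solution $\lambda\in(0,\lambda^*]$. At $c=c^*(e)$, the function $g$ is convex, non-negative, analytic, and satisfies $g(0)=k_0(e)>0$; two distinct zeros would force $g\equiv 0$ on the joining interval and hence everywhere by analyticity, contradicting $g(0)>0$, which proves uniqueness. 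For $c>c^*(e)$, I would exploit the fact that $k_\lambda(e)$ grows at least quadratically in $\lambda$ (the zeroth-order term of $L_\lambda$ contains $\lambda^2 e^T A(x) e\geq \gamma\lambda^2$, so testing the Rayleigh quotient with a constant function gives $k_\lambda(e)\geq \gamma\lambda^2 - C\lambda-C'$) to conclude $g(\lambda)\to+\infty$. Then $g(0)>0$, $g(\lambda^*)<0$, and $g(+\infty)=+\infty$ together with convexity and analyticity produce exactly two zeros $\lambda_c^-<\lambda_c^+$.

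For statement (ii), I would decompose continuity of $c^*(e)$ into upper and lower semicontinuity. Upper semicontinuity is free: for any fixed $\lambda>0$, $e\mapsto k_\lambda(e)/\lambda$ is continuous by Proposition~\ref{principal-eigenelements}(iii), so $c^*(e)=\inf_{\lambda>0} k_\lambda(e)/\lambda$ is an infimum of continuous functions, hence upper semicontinuous. For lower semicontinuity, given $e_n\to e$ with minimizers $\lambda_n^*$ realizing $c^*(e_n)=k_{\lambda_n^*}(e_n)/\lambda_n^*$, I would use uniform-in-$e$ lower bounds on $k_\lambda(e)$ (namely $k_\lambda(e)\geq\delta>0$ and $k_\lambda(e)\geq\gamma\lambda^2-C\lambda-C'$) to show the sequence $(\lambda_n^*)$ is bounded and bounded away from $0$. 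Extracting a convergent subsequence $\lambda_n^*\to\bar\lambda$ and using continuity of $k_\lambda(e)$ jointly in $(\lambda,e)$ then gives $c^*(e_n)\to h(\bar\lambda)\geq c^*(e)$, so equality holds. For $\lambda_c(e)$, I would apply the implicit function theorem at $c>c^*(e)$: at the smaller root $\lambda_c^-(e)$, strict convexity of $g$ (ruled non-degenerate by the analyticity argument above) gives $\partial_\lambda g<0$, so $\lambda_c(e)$ varies continuously; boundedness in $e$ follows by combining the compactness of $\mathbb{S}^{N-1}$ with the uniform quadratic lower bound on $k_\lambda(e)$. The borderline case $c=c^*(e)$ would be handled by taking a limit from $c'\downarrow c^*(e)$, exploiting that $\lambda_{c'}^-(e)\to\lambda^*(e)$ monotonically.

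The principal obstacle is the lower semicontinuity of $c^*(e)$, which demands uniform control of the minimizer family $\{\lambda^*(e)\}_{e\in\mathbb{S}^{N-1}}$ in a compact subset of $(0,\infty)$. This in turn requires strengthening the pointwise limits in Proposition~\ref{principal-eigenelements}(ii) to estimates that are uniform in $e$, which relies essentially on the uniform ellipticity of $A$ (to bound $k_\lambda(e)$ below by $\gamma\lambda^2+O(\lambda)$ irrespective of the direction $e\in\mathbb{S}^{N-1}$). A secondary subtlety is the case $c=c^*(e)$ in the continuity statement for $\lambda_c(e)$, since the implicit function theorem degenerates at the tangent point and a careful passage to the limit is required.
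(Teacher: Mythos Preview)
The paper does not actually prove Proposition~\ref{minimal-speed}: it simply states that ``these results can be found in \cite{BH2002,BHN2005}'' and that the last assertion ``can be readily deduced from \cite{BH2002,N2010}'', also citing \cite{AG2016} for the continuity of $e\mapsto c^*(e)$. So there is no proof in the paper to compare against; you are supplying a self-contained argument where the authors defer to the literature.

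Your argument is essentially correct and follows the standard route. Two small points are worth tightening. First, the phrase ``testing the Rayleigh quotient with a constant function'' is not quite the right mechanism, since $L_\lambda$ is not self-adjoint; the clean way to get the uniform quadratic lower bound $k_\lambda(e)\geq\gamma\lambda^2-C|\lambda|+\min_{\mathbb{T}^N}f_u(\cdot,0)$ is either to evaluate $L_\lambda\Phi_{\lambda,e}/\Phi_{\lambda,e}$ at a minimum point of $\Phi_{\lambda,e}$, or to invoke the characterization $k_\lambda(e)=\sup_{\phi>0}\inf_x L_\lambda\phi/\phi$ with $\phi\equiv 1$. Either way the constants are uniform in $e\in\mathbb{S}^{N-1}$ because $\|e\|=1$ and $A\in C^{1+\alpha}(\mathbb{T}^N)$, which is exactly what you need for the lower-semicontinuity step. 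Second, your treatment of the ``borderline case $c=c^*(e)$'' for the continuity of $\lambda_c(e)$ is phrased as a limit $c'\downarrow c^*(e)$, but what is actually required is a limit in $e$: given $e_n\to e$ with $c\geq c^*(e_n)$, the uniform quadratic bound confines $\lambda_c(e_n)$ to a compact subset of $(0,\infty)$, any subsequential limit $\bar\lambda$ solves $k_{\bar\lambda}(e)=c\bar\lambda$ by joint continuity, and one checks $\bar\lambda$ is the \emph{smallest} root (automatic if $c=c^*(e)$; if $c>c^*(e)$, pass to the limit in $\partial_\lambda g\leq 0$). With these adjustments your proof is complete.
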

These results can be found in \cite{BH2002,BHN2005} and the last assertion in Proposition \ref{minimal-speed} can be readily deduced from \cite{BH2002,N2010} under KPP assumptions. We mention that the authors in \cite{AG2016} have shown the continuity of the function $e\in\mathbb{S}^{N-1}\mapsto c^*(e)$ in monostable (not necessarily with KPP assumption) and ignition cases.
\section{Existence of pulsating travelling fronts}\label{proof}
This section is devoted to the proof of Theorem \ref{main-result}.
Firstly, we investigate the existence of pulsating travelling fronts propagating along each direction with rational coordinates in $\mathbb{S}^{N-1}$ by solving a space-time periodic problem.
Next, a rational approximation will allow us to obtain the existence of pulsating travelling fronts propagating in any direction of propagation. Finally, we prove the existence of the minimal wave speed $c^*(e)$.
\subsection{Construction of sub- and supersolutions}\label{invariant-domain}
The goal of this subsection is to construct sub- and supersolutions of \eqref{profile-equation} with supercritical speeds and for all directions of propagation. For notational simplicity, we temporarily forget the dependence of $k_\lambda(e)$, $\Phi_{\lambda,e}$ and $\lambda_c(e)$ on $e\in \mathbb S^{N-1}$ in Subsections \ref{invariant-domain}-\ref{existence-rational-direction} below.
\begin{lemma}\label{subsolution}
For all $c>c^{*}(e)$, let $\psi^{-}$ be the function defined by
$$
\psi^{-}(\xi,t,y)=\Phi_{\lambda_{c}}(X)
e^{-\lambda_{c}\xi}-K\Phi_{\lambda_{c}+\delta}(X)
e^{-(\lambda_{c}+\delta)\xi},
$$
where the functions $\Phi_{\lambda_{c}}$ and $\Phi_{\lambda_{c}+\delta}$ are the unique principal eigenfunctions of problem \eqref{periodic-eigenvalue-problem} in the sense of \eqref{normalization} corresponding to $\lambda_{c}$ and $\lambda_{c}+\delta$, respectively. Then there exist some $\delta\in\left(0,\min\{\theta\lambda_c,\lambda^+_{c}-\lambda_{c}\}\right)$ small enough and $K>0$ large enough such that the function $\psi^{-}\leq1$ is a subsolution of the equation \eqref{profile-equation} on the set
$$
\Omega^+:=\{(\xi,t,y)\in\mathbb{R}\times\mathbb{R}
\times\mathbb{R}^{N-1}: \psi^-(\xi,t,y)>0\}.
$$
\end{lemma}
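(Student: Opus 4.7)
The plan is to exploit a simple identity under the parabolic operator $\mathcal L[\varphi]:=\varphi_t-\dive_{\xi,y}(\widetilde A\nabla_{\xi,y}\varphi)-c\varphi_\xi$ satisfied by each exponential building block $\bar\varphi_\mu(\xi,t,y):=\Phi_\mu(X)e^{-\mu\xi}$ with $X=R(\xi+ct,y)^T$. Because the change of variables in Section~\ref{new-moving-frame} is an equivalence, $\mathcal L[\bar\varphi_\mu]$ corresponds in the original variables to $\bar u_t-\dive(A\nabla\bar u)$ applied to $\bar u(t,x)=\Phi_\mu(x)e^{-\mu(x\cdot e-ct)}$. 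A direct computation using the symmetry of $A$ and the very definition of $L_\mu$ reveals
\[
\mathcal L[\bar\varphi_\mu]=\bigl(c\mu-k_\mu+\widetilde f_\varphi(\cdot,0)\bigr)\bar\varphi_\mu.
\]

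Applying linearity to $\psi^-=\bar\varphi_{\lambda_c}-K\bar\varphi_{\lambda_c+\delta}$, the identity $k_{\lambda_c}=c\lambda_c$ eliminates the first remainder, while Proposition~\ref{principal-eigenelements}\,(i) (convexity of $\lambda\mapsto k_\lambda-c\lambda$) together with $\lambda_c$ and $\lambda_c^+$ being its two zeros produces the crucial sign $\eta:=c(\lambda_c+\delta)-k_{\lambda_c+\delta}>0$ whenever $0<\delta<\lambda_c^+-\lambda_c$. Collecting terms gives
\[
\mathcal L[\psi^-]=\widetilde f_\varphi(\cdot,0)\,\psi^- - K\eta\,\bar\varphi_{\lambda_c+\delta}.
\]
Hence the subsolution inequality $\mathcal L[\psi^-]\leq\widetilde f(\cdot,\psi^-)$ on $\Omega^+$ is equivalent to $\widetilde f_\varphi(\cdot,0)\psi^- - \widetilde f(\cdot,\psi^-)\leq K\eta\,\bar\varphi_{\lambda_c+\delta}$. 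The left-hand side is non-negative by the KPP inequality $\widetilde f(\cdot,s)\leq\widetilde f_\varphi(\cdot,0)s$, and the $C^{1+\theta}$-regularity of $f$ in $u$ yields a constant $C>0$ with $\widetilde f_\varphi(\cdot,0)s-\widetilde f(\cdot,s)\leq C s^{1+\theta}$ for $s\in[0,1]$, so using $\psi^-\leq\bar\varphi_{\lambda_c}$ it suffices to verify
\[
C\,\bar\varphi_{\lambda_c}^{1+\theta}\leq K\eta\,\bar\varphi_{\lambda_c+\delta}\qquad\text{on }\Omega^+.
\]

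Here the explicit exponential structure enters: the ratio $\bar\varphi_{\lambda_c}^{1+\theta}/\bar\varphi_{\lambda_c+\delta}$ carries the exponent $-(\theta\lambda_c-\delta)\xi$, which motivates the extra smallness $\delta<\theta\lambda_c$. Using uniform positive lower and upper bounds for the continuous eigenfunctions on $\mathbb T^N$, the defining inequality of $\Omega^+$ forces $\xi\geq \delta^{-1}\log(Km)$ for a fixed $m>0$; substituting this bounds the ratio by a constant multiple of $K^{-(\theta\lambda_c-\delta)/\delta}$, and the inequality collapses to $K\geq K_0$ for an explicit $K_0$ depending only on $\delta$, $\theta$, $\eta$, and the eigenfunction norms. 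The same exponential bound also gives $\bar\varphi_{\lambda_c}\leq(Km)^{-\lambda_c/\delta}$ on $\Omega^+$, from which $\psi^-\leq 1$ follows after enlarging $K$ once more.

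The main obstacle is the interplay of the two smallness constraints on $\delta$: the geometric one $\delta<\lambda_c^+-\lambda_c$ is forced by the spectral structure and produces the gap $\eta>0$, while the analytic one $\delta<\theta\lambda_c$ encodes the matching of exponential rates with the H\"older defect. A choice respecting only the former would let the correction $(\psi^-)^{1+\theta}\sim e^{-(1+\theta)\lambda_c\xi}$ overwhelm the linear surplus $K\eta\,\bar\varphi_{\lambda_c+\delta}\sim e^{-(\lambda_c+\delta)\xi}$ for large $\xi$, destroying the subsolution property; only with both bounds does the argument close, and the constants must be picked in the order $\delta$ first, $K$ second.
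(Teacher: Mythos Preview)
Your proof is correct and follows essentially the same approach as the paper: compute $\mathcal L$ on each exponential block via the eigenvalue problem, use convexity of $\lambda\mapsto k_\lambda$ to get the positive gap $\eta=c(\lambda_c+\delta)-k_{\lambda_c+\delta}>0$ for $\delta\in(0,\lambda_c^+-\lambda_c)$, invoke the $C^{1+\theta}$ regularity of $f$ to control the nonlinear defect by $C(\psi^-)^{1+\theta}\leq C\bar\varphi_{\lambda_c}^{1+\theta}$, and finally exploit $\delta<\theta\lambda_c$ to match the exponential rates. The only cosmetic difference is in how the exponential comparison is closed: the paper adds the extra constraint $K\geq\sup_{\mathbb T^N}\Phi_{\lambda_c}/\Phi_{\lambda_c+\delta}$ so that $\Omega^+\subset\{\xi>0\}$, and then $e^{-(1+\theta)\lambda_c\xi}\leq e^{-(\lambda_c+\delta)\xi}$ immediately yields $\gamma\Phi_{\lambda_c}^{1+\theta}\leq r_\delta K\Phi_{\lambda_c+\delta}$ as a pointwise inequality on $\mathbb T^N$; you instead extract the quantitative bound $\xi\geq\delta^{-1}\log(Km)$ directly from the defining inequality of $\Omega^+$ and feed it into the ratio $\bar\varphi_{\lambda_c}^{1+\theta}/\bar\varphi_{\lambda_c+\delta}$, which leads to the same conclusion after a short power-counting argument.
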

\begin{proof} By Propositions \ref{principal-eigenelements}-\ref{minimal-speed}, one has $k^{\prime}_{\lambda{_c}}<c$ for any $c>c^{*}(e)$ (the prime denotes the derivative with respect to $\lambda$) since $\lambda_c$ is the smallest positive solution such that $k_\lambda-c\lambda=0$, $k_0>0$ (due to $f_u(\cdot,0)>0$) and $k_\lambda$ is convex with respect to $\lambda$. Therefore, we can choose some $\delta\in(0,\lambda^+_{c}-\lambda_{c})$ small enough so that \begin{equation}\label{r_delta}
r_\delta:=c(\lambda_{c}+\delta)-k_{\lambda_{c}+\delta}>0.
\end{equation}
Moreover, we know from our hypothesis \eqref{reaction-hypothesis} that there exists some $\gamma>0$ such that
\begin{align}\label{subsolution-proof-1}
\widetilde{f}(\xi,t,y,s)\geq \widetilde{f}_{\varphi}(\xi,t,y,0)s-\gamma s^{1+\theta},~~\forall s\in[0,1],
~\forall(\xi,t,y)\in\mathbb{R}\times\mathbb{R}\times\mathbb{R}^{N-1}.
\end{align}
Let $K_1>0$ be large enough such that
\begin{align}\label{subsolution-proof-2}
\max_{(\xi,X)\in\mathbb{R}\times\mathbb{T}^{N}}
\left(\Phi_{\lambda_{c}}(X)
e^{-\lambda_{c}\xi}-K_1\Phi_{\lambda_{c}+\delta}(X)
e^{-(\lambda_{c}+\delta)\xi}\right)\leq1
\end{align}
and set
\begin{align*}
  K=\max\left\{K_1,\frac{\gamma}{r_\delta}\sup_{\mathbb{T}^N}
  \frac{\Phi^{1+\theta}_{\lambda_c}}{\Phi_{\lambda_{c}+\delta}},
  \sup_{\mathbb{T}^N}\frac{\Phi_{\lambda_c}}{\Phi_{\lambda_{c}+\delta}}\right\}.
\end{align*}
Then, we have
\begin{align}\label{subsolution-proof-3}
  \gamma\Phi^{1+\theta}_{\lambda_c}(X)\leq r_{\delta}K\Phi_{\lambda_{c}+\delta}(X),~~\forall X\in\mathbb{T}^N
\end{align}
and
$
\psi^-(\xi,t,y)\leq0$ for all $(\xi,t,y)\in(-\infty,0]\times\mathbb{R}^{N}$.
Now fix  $\delta\in\left(0,\min\{\theta\lambda_c,\lambda^+_{c}-\lambda_{c}\}\right)$. Due to \eqref{subsolution-proof-1}-\eqref{subsolution-proof-3}, we obtain that
\begin{align*}
 &\psi^-_t-\dive_{\xi,y}\big(\widetilde{A}(\xi,t,y)\nabla_{\xi,y}\psi^-\big)
 -c\psi^-_\xi\\
 =&f_u(X,0)\psi^{-}-K\left[c(\lambda_{c}+\delta)-
 k_{\lambda_{c}+\delta}\right]\Phi_{\lambda_{c}+\delta}(X)
e^{-(\lambda_{c}+\delta)\xi}\\
=&\widetilde{f}_{\varphi}(\xi,t,y,0)\psi^{-}-r_\delta K\Phi_{\lambda_{c}+\delta}(X)
e^{-(\lambda_{c}+\delta)\xi}\\
\leq& \widetilde{f}(\xi,t,y,\psi^{-})+\gamma(\psi^{-})^{1+\theta}-r_\delta K\Phi_{\lambda_{c}+\delta}(X)e^{-(\lambda_{c}+\delta)\xi}\\
\leq& \widetilde{f}(\xi,t,y,\psi^{-})+\gamma\Phi^{1+\theta}_{\lambda_c}(X)
e^{-(1+\theta)\lambda_{c}\xi}-r_\delta K\Phi_{\lambda_{c}+\delta}(X)
e^{-(\lambda_{c}+\delta)\xi}\\
\leq& \widetilde{f}(\xi,t,y,\psi^{-})+\left(\gamma\Phi^{1+\theta}_{\lambda_c}(X)-
r_\delta K\Phi_{\lambda_{c}+\delta}(X)\right)
e^{-(\lambda_{c}+\delta)\xi}\\
\leq& \widetilde{f}(\xi,t,y,\psi^{-})
\end{align*}
for all $(\xi,t,y)\in\Omega^+$, where the third inequality holds due to $\Omega^+\subset\mathbb{R}_+\times\mathbb{R}^{N}$ under our choice for the parameters $K$ and $\delta$. Thus, $\psi^{-}$ is a subsolution of \eqref{profile-equation} on the set $\Omega^+$.
\end{proof}
\begin{lemma}\label{supersolution}
For all $c\geq c^{*}(e)$, the function
$$
\psi^+(\xi,t,y)=\Phi_{\lambda_{c}}(X)
e^{-\lambda_{c}\xi}
$$
is a global supersolution of the equation \eqref{profile-equation}, where the function $\Phi_{\lambda_{c}}$ is the unique principal eigenfunction of \eqref{periodic-eigenvalue-problem} in the sense of \eqref{normalization} corresponding to $\lambda_{c}$.
\end{lemma}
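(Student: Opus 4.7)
The plan is to exploit the fact that $\psi^{+}$ is, by construction, an exact solution of the linearization of \eqref{profile-equation} at $0$, and then use the KPP hypothesis to upgrade this identity to the desired supersolution inequality.

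First I would verify the linearized identity. The ansatz $\varphi(\xi,t,y)=e^{-\lambda\xi}\Phi(X)$ with $X=R(\xi+ct,y)^{T}$ was introduced in Section \ref{preliminaries} precisely so that the equation $\varphi_{t}-\dive_{\xi,y}(\widetilde{A}\nabla_{\xi,y}\varphi)-c\varphi_{\xi}=\widetilde{f}_{\varphi}(\xi,t,y,0)\varphi$ reduces to the elliptic eigenvalue equation $L_{\lambda}\Phi=c\lambda\Phi$ on $\mathbb{T}^{N}$. By the very definition of $\lambda_{c}=\lambda_{c}(e)$ in Section \ref{preliminaries}, the pair $(\lambda_{c},\Phi_{\lambda_{c}})$ solves $L_{\lambda_{c}}\Phi_{\lambda_{c}}=k_{\lambda_{c}}(e)\Phi_{\lambda_{c}}$ with $k_{\lambda_{c}}(e)=c\lambda_{c}$. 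Substituting back, $\psi^{+}$ is an exact solution of the linearized equation
\begin{equation*}
\psi^{+}_{t}-\dive_{\xi,y}\bigl(\widetilde{A}(\xi,t,y)\nabla_{\xi,y}\psi^{+}\bigr)-c\psi^{+}_{\xi}=\widetilde{f}_{\varphi}(\xi,t,y,0)\,\psi^{+}.
\end{equation*}

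Next I would invoke the KPP structure. The third line of \eqref{reaction-hypothesis} says $s\mapsto f(x,s)/s$ is decreasing on $(0,\infty)$, which combined with $f_{u}(x,0)=\lim_{s\to 0^{+}}f(x,s)/s$ gives $f(x,s)\leq f_{u}(x,0)\,s$ for all $s\in(0,1]$. For $s>1$ the paper already observes that $f(x,s)<0<f_{u}(x,0)\,s$ (since $f_{u}(\cdot,0)>0$), so the bound $f(x,s)\leq f_{u}(x,0)s$ holds for every $s\geq 0$. Translating through the definitions $\widetilde{f}(\xi,t,y,s)=f(R(\xi+ct,y)^{T},s)$ and $\widetilde{f}_{\varphi}(\xi,t,y,0)=f_{u}(R(\xi+ct,y)^{T},0)$ yields
\begin{equation*}
\widetilde{f}(\xi,t,y,s)\leq \widetilde{f}_{\varphi}(\xi,t,y,0)\,s\qquad\forall\,s\geq 0,\ \forall\,(\xi,t,y)\in\mathbb{R}\times\mathbb{R}\times\mathbb{R}^{N-1}.
\end{equation*}
Since $\Phi_{\lambda_{c}}>0$ on $\mathbb{T}^{N}$, we have $\psi^{+}\geq 0$ everywhere, so this pointwise inequality applies at $s=\psi^{+}(\xi,t,y)$.

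Combining the two steps gives
\begin{equation*}
\psi^{+}_{t}-\dive_{\xi,y}\bigl(\widetilde{A}\nabla_{\xi,y}\psi^{+}\bigr)-c\psi^{+}_{\xi}=\widetilde{f}_{\varphi}(\xi,t,y,0)\,\psi^{+}\geq \widetilde{f}(\xi,t,y,\psi^{+}),
\end{equation*}
which is the global supersolution inequality on all of $\mathbb{R}\times\mathbb{R}\times\mathbb{R}^{N-1}$. There is no real obstacle here once the linearized identity is in place; the only point requiring mild care is the verification that the KPP inequality $f(x,s)\leq f_{u}(x,0)s$ extends from the natural interval $[0,1]$ to all $s\geq 0$, which is handled by the sign remark above and allows the argument to avoid any truncation or restriction to a half-space (in contrast with the subsolution of Lemma \ref{subsolution}).
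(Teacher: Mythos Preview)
Your proof is correct and follows essentially the same approach as the paper's: both compute that $\psi^{+}$ solves the linearization of \eqref{profile-equation} at $0$ via the eigenvalue relation $L_{\lambda_{c}}\Phi_{\lambda_{c}}=c\lambda_{c}\Phi_{\lambda_{c}}$, and then apply the KPP inequality $\widetilde{f}(\cdot,s)\leq\widetilde{f}_{\varphi}(\cdot,0)s$. Your treatment is in fact slightly more careful than the paper's in explicitly justifying this inequality for $s>1$ (which is needed since $\psi^{+}$ is unbounded as $\xi\to-\infty$), whereas the paper leaves that step implicit.
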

\begin{proof}
To prove the lemma we compute
\begin{align*}
 &\left(\Phi_{\lambda_{c}}(X)e^{-\lambda_{c}\xi}\right)_t-
 \dive_{\xi,y}\left(\widetilde{A}(\xi,t,y)\nabla_{\xi,y}(\Phi_{\lambda_{c}}(X)
 e^{-\lambda_{c}\xi})\right)-
 c\left(\Phi_{\lambda_{c}}(X)e^{-\lambda_{c}\xi}\right)_\xi\\
 =&\left[c\lambda_c\Phi_{\lambda_{c}}-\dive\left(A(X)\nabla
 \Phi_{\lambda_{c}}\right)+2\lambda_{c}eA(X)\Phi_{\lambda_{c}}
 -\left(\lambda^2_{c}eA(X)e-\lambda_c\dive\left(A(X)e\right)\right)
 \Phi_{\lambda_{c}}\right]e^{-\lambda_{c}\xi}\\
 =&\widetilde{f}_{\varphi}(\xi,t,y,0)\Phi_{\lambda_{c}}e^{-\lambda_{c}\xi}
 \geq \widetilde{f}\left(\xi,t,y,\Phi_{\lambda_{c}}e^{-\lambda_{c}\xi}\right)
\end{align*}
for all $(\xi,t,y)\in\mathbb{R}\times\mathbb{R}\times\mathbb{R}^{N-1}$. This gives the conclusion.
\end{proof}
\subsection{Pulsating travelling fronts in each rational direction}
\label{existence-rational-direction}
In this subsection, we will prove the existence of pulsating travelling fronts for Problem \eqref{general-equation} propagating in each unit direction with rational coordinates with supercritical speeds under the framework of Section \ref{new-moving-frame}.
\subsubsection{The derivation of a space-time periodic problem}
The goal of this part is to transform Problem \eqref{profile-equation}-\eqref{profile-limits} into a space-time periodic problem. To this end, we first prove a result of independent interest, which plays a crucial role in our argument.
\begin{lemma}\label{change-basis}
  Assume that $\zeta\in\mathbb{Q}^N\cap\mathbb{S}^{N-1}$ for some $N\geq2$. Then there exist an orthogonal basis $\{\zeta_i\}_{i=1}^{N}$ of $\mathbb{R}^N$ and $N$ constants $\tau_i>0$ such that $\zeta_1=\zeta$ and
  $$
  \mathbb{Z}^N=\bigoplus_{i=1}^N\tau_i\mathbb{Z}\zeta_i,
  ~~\zeta_{i}\perp\zeta_{j}~ (i\neq j), ~~i,j=1,\ldots,N.
  $$
\end{lemma}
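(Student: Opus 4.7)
The strategy is to induct on $N$, with the base case $N=1$ trivial and the inductive step reducing the problem inside the orthogonal complement $\zeta^\perp\cong\mathbb{R}^{N-1}$.

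First I would extract a primitive integer representative of $\zeta$. Since $\zeta\in\mathbb{Q}^N\cap\mathbb{S}^{N-1}$, clearing denominators and dividing by the gcd of the coordinates produces a vector $v\in\mathbb{Z}^N$ with coprime entries satisfying $\zeta=v/|v|$. The identity $|v|^2=v\cdot v\in\mathbb{Z}$, together with the rationality of the coordinates $v_i/|v|$ of $\zeta$, forces $|v|$ to be a positive integer. Setting $\zeta_1=\zeta$ and $\tau_1=|v|$ then gives $\tau_1\zeta_1=v\in\mathbb{Z}^N$, a primitive lattice vector of length $\tau_1$.

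The next step is to produce the orthogonal directions $\zeta_2,\ldots,\zeta_N\in\zeta^\perp$ and positive constants $\tau_2,\ldots,\tau_N$. Since $v$ is primitive, it extends to a $\mathbb{Z}$-basis $\{v,w_2,\ldots,w_N\}$ of $\mathbb{Z}^N$ by a classical result (provable via the Smith normal form or by induction using elementary column operations). My plan is either to apply Gram-Schmidt orthogonalization to this basis starting from $v$, obtaining orthogonal real vectors $u_1=v,u_2,\ldots,u_N$, then set $\zeta_i=u_i/|u_i|$ and determine the $\tau_i$ so that the resulting orthogonal lattice $\bigoplus_i \tau_i\mathbb{Z}\zeta_i$ matches $\mathbb{Z}^N$, or equivalently to pick a rational unit direction $\zeta_2\in\zeta^\perp$ lying inside $\mathbb{Z}^N\cap\zeta^\perp$ and recursively apply the statement inside $\zeta^\perp\cong\mathbb{R}^{N-1}$, iterating to build $\zeta_3,\ldots,\zeta_N$.

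The main obstacle I expect is reconciling orthogonality with the integer structure of $\mathbb{Z}^N$. After splitting off $v$, the naive orthogonal sublattice $\mathbb{Z}v\oplus(\mathbb{Z}^N\cap v^\perp)$ is generally a strict sublattice of $\mathbb{Z}^N$ (with index typically $|v|^2$), so the orthogonal generators cannot simultaneously be the shortest integer vectors in their directions and generate all of $\mathbb{Z}^N$. The delicate step is therefore to choose the $\tau_i$'s and the rational orthogonal directions at each inductive stage so that the resulting lattice exactly recovers $\mathbb{Z}^N$. Keeping careful track of the Gram-Schmidt coefficients (or equivalently of the change-of-basis matrix between $\{v,w_2,\ldots,w_N\}$ and the orthogonal completion), and exploiting at each stage the existence of a rational unit vector in the orthogonal complement of the previously constructed directions, is the combinatorial heart of the argument.
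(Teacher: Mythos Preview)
Your identification of the obstacle is correct, but you underestimate it: it is not a ``delicate step'' to be handled by bookkeeping---it is fatal for the equality as literally stated. Take $\zeta=(3/5,4/5)$. Any equality $\mathbb{Z}^2=\tau_1\mathbb{Z}\zeta\oplus\tau_2\mathbb{Z}\zeta_2$ with $\zeta_2\perp\zeta$ would force the generators $\tau_1\zeta,\tau_2\zeta_2$ to lie in $\mathbb{Z}^2$, hence $\tau_1,\tau_2\in5\mathbb{Z}$; but then the determinant of the matrix $(\tau_1\zeta\mid\tau_2\zeta_2)$ equals $\tau_1\tau_2\geq25$, so this pair cannot generate $\mathbb{Z}^2$. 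The lemma is simply false as an \emph{equality} of lattices, and no choice of the $\tau_i$'s rescues it.

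What the paper actually proves in its Appendix, and what it actually needs downstream (to pass from the $R$-pulsating condition to genuine $(t,y)$-periodicity of the profile), is only the inclusion $\mathbb{Z}^N\subseteq\bigoplus_i\tau_i\mathbb{Z}\zeta_i$: every $k\in\mathbb{Z}^N$ can be written as $\sum_i\tau_ip_i\zeta_i$ with integer $p_i$. For this weaker assertion your choice $\tau_1=|v|$ is also backwards: one needs $k\cdot\zeta=(k\cdot v)/|v|\in\tau_1\mathbb{Z}$ for every $k\in\mathbb{Z}^N$, and since $v$ is primitive the values $k\cdot v$ sweep out all of $\mathbb{Z}$, forcing $\tau_1\leq1/|v|$ rather than $\tau_1=|v|$. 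Once you aim only for the inclusion and correct the $\tau_i$'s accordingly, either your abstract route (extend $v$ to a $\mathbb{Z}$-basis, then Gram--Schmidt) or the paper's more computational one (strip the zero coordinates of $\zeta$, run Gram--Schmidt on $\{\zeta,e_1,\ldots,e_{d-1}\}$, and write down the $\tau_i$'s by hand) goes through without difficulty.
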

The proof of this key lemma is given in Appendix. Now, for any $k\in\mathbb{Z}^N$ and for some given vector $\zeta\in\mathbb{Q}^{N}\cap\mathbb{S}^{N-1}$, we decompose $k$ along $\zeta$ and $\zeta^\perp$, that is,
$$
k=(k\cdot\zeta)\zeta+k_\perp~\mbox{ with }k_\perp\in\zeta^\perp.
$$
Using Lemma \ref{change-basis}, we obtain that
$$
k\cdot\zeta=\tau_{1}p_1~\text{ and }~
k_\perp=k-(k\cdot\zeta)\zeta=\sum_{i=2}^{N}\tau_ip_i\zeta_i
$$
where $p_i\in\mathbb{Z}$ for all $1\leq i\leq N$. Recall the moving frame in Section \ref{new-moving-frame} and consider a given rational direction $\zeta\in\mathbb{Q}^{N}\cap\mathbb{S}^{N-1}$. Then, the $R-$pulsating condition \eqref{profile-periodicity} in the direction $\zeta$ actually amounts to the following periodicity condition:
\begin{align}\label{space-time-periodic-profile}
&\forall\,p_1\in\mathbb{Z},~~
\forall\,\tau\in\tau_2\mathbb{Z}\times\cdots\times\tau_N\mathbb{Z},~~
\forall\,(\xi,t,y)\in\mathbb{R}\times\mathbb{R}\times\mathbb{R}^{N-1},
\notag\\
&\varphi\left(\xi,t+\frac{k\cdot\zeta}{c},y+\hat{R}k_\perp\right)=
\varphi\left(\xi,t+\frac{p_1\tau_{1}}{c},y+\tau\right)=\varphi(\xi,t,y).
\end{align}
Likewise, $\widetilde{A}$ and $\widetilde{f}$ also satisfy the same periodicity as \eqref{space-time-periodic-profile}.
Consequently, the existence of pulsating travelling fronts of \eqref{general-equation} propagating in the direction $\zeta\in\mathbb{Q}^{N}\cap\mathbb{S}^{N-1}$ is equivalent to solving the space-time periodic problem
\begin{equation}\label{rational-direction-equation}
 \begin{dcases}
   \varphi_t-\dive_{\xi,y}\big(\widetilde{A}(\xi,t,y)\nabla_{\xi,y}\varphi\big)
   -c\varphi_{\xi}=\widetilde{f}(\xi,t,y,\varphi)~\mbox{ in }
   \mathbb{R}\times\mathbb{R}\times\mathbb{R}^{N-1},\\
   \varphi\text{ is }\frac{\tau_1}{c}\text{-periodic in }t\text{ and }
   \tau\text{-periodic in }y,\\
   \lim_{\xi\to-\infty}\varphi(\xi,t,y)=1\mbox{ and } \lim_{\xi\to+\infty}\varphi(\xi,t,y)=0,
 \end{dcases}
\end{equation}
where the limits hold uniformly with respect to $t\in\mathbb{R}$ and $y\in\mathbb{R}^{N-1}$.

Let us comment on \eqref{space-time-periodic-profile}-\eqref{rational-direction-equation}.
As underlined in Remark \ref{profile-not-periodic}, in the frame moving with speed $c$, the profile of the front propagating in any given direction $e\in\mathbb{S}^{N-1}$ is in general not periodic in time.
However, the above observation reveals a very interesting phenomenon that not only for the directions of standard coordinate vectors but also for those directions with rational coordinates, the profile of pulsating fronts is periodic in time. In the following we shall split into two steps to solve \eqref{rational-direction-equation}: the first one considers a similar space-time periodic problem in a strip and the second one is to pass to the limit in the unbounded domains. Such an approach has been widely used to show the existence of travelling fronts (see \cite{BH2002,BHR2005-2,N2009} for instances).
\subsubsection{Existence in a strip}
Here we construct a solution of \eqref{rational-direction-equation} on a bounded domain with respect to $\xi$. In the following, we fix a vector $\zeta\in\mathbb{Q}^{N}\cap\mathbb{S}^{N-1}$ and a speed $c>c^*(\zeta)$. Let $C$ be the periodicity cell defined by
$C=(0,\tau_2)\times\cdots\times(0,\tau_N)$.
We consider the functions $\overline{\varphi}$ and $\underline{\varphi}$ defined by
\begin{equation}\label{super-sub-solution}
\overline{\varphi}(\xi,t,y)=\min\{1,\psi^+(\xi,t,y)\}~\mbox{ and }
~\underline{\varphi}(\xi,t,y)=\max\{0,\psi^-(\xi,t,y)\}
\end{equation}
for all $(\xi,t,y)\in\mathbb{R}\times\mathbb{R}\times\mathbb{R}^{N-1}$, where the functions $\psi^-$ and $\psi^+$ are defined in Lemmas \ref{subsolution}-\ref{supersolution}, respectively. Let $a>0$ be given and set $\Sigma_a=(-a,a)\times\mathbb{R}\times\mathbb{R}^{N-1}$. Consider the following boundary conditions associated with Problem \eqref{rational-direction-equation}:
\begin{equation}\label{equation-strip}
 \begin{dcases}
   \varphi_t-\dive_{\xi,y}\big(\widetilde{A}(\xi,t,y)\nabla_{\xi,y}\varphi\big)
   -c\varphi_{\xi}=\widetilde{f}(\xi,t,y,\varphi)~\text{ in }\Sigma_a,
   \\
   \varphi\text{ is }\frac{\tau_1}{c}\text{-periodic in }t\text{ and }\tau\text{-periodic in }y,\\
   \forall(t,y)\in\mathbb{R}\times\mathbb{R}^{N-1},~
   \varphi(-a,t,y)=\overline{\varphi}(-a,t,y),\\
   \forall(t,y)\in\mathbb{R}\times\mathbb{R}^{N-1},~
   \varphi(a,t,y)=\underline{\varphi}(a,t,y).
 \end{dcases}
\end{equation}
Note first that there exists $a_0>0$ large enough so that for any $a\geq a_0$, $\overline{\varphi}(-a,t,y)=1$ and $\underline{\varphi}(a,t,y)=\psi^-(a,t,y)$ for all $(t,y)\in\mathbb{R}\times\mathbb{R}^{N-1}$. Then we prove the following
\begin{lemma}\label{existence-lemma}
  For any $a>a_0$, Problem \eqref{equation-strip} admits a solution $\varphi_{a}\in C^{2,1,2}(\overline{\Sigma_a})$ which satisfies
  $$
  \underline{\varphi}(\xi,t,y)
  \leq\varphi_{a}(\xi,t,y)\leq\overline{\varphi}(\xi,t,y),~~
  \forall(\xi,t,y)\in\overline{\Sigma_{a}}.
  $$
\end{lemma}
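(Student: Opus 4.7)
The plan is to quotient by the periodicity group and view \eqref{equation-strip} as a classical semilinear parabolic boundary value problem on the bounded cylinder
$$
Q_a:=(-a,a)\times\bigl(\mathbb R/(\tau_1/c)\mathbb Z\bigr)\times\bigl(\mathbb R^{N-1}/(\tau_2\mathbb Z\oplus\cdots\oplus\tau_N\mathbb Z)\bigr),
$$
on which $\widetilde A$, $\widetilde f$ and the barriers $\overline\varphi,\underline\varphi$ all descend by virtue of \eqref{space-time-periodic-profile}. On this compact base I would build $\varphi_a$ via a monotone iteration between the ordered sub/supersolution pair $\underline\varphi\le\overline\varphi$.

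First I would check that $\overline\varphi$ and $\underline\varphi$ are admissible barriers. The constant $1$ is a stationary supersolution since $\widetilde f(\cdot,1)=0$, and $\psi^+$ is a supersolution by Lemma~\ref{supersolution}, so $\overline\varphi=\min\{1,\psi^+\}$ is a supersolution in the distributional sense. Symmetrically, $0$ is a stationary subsolution and $\psi^-$ is a subsolution on $\Omega^+$ by Lemma~\ref{subsolution}, so $\underline\varphi=\max\{0,\psi^-\}$ is a subsolution. The ordering $\underline\varphi\le\overline\varphi$ follows from $\psi^-\le\psi^+$ together with the bound $\psi^-\le 1$ built into the choice of $K$ in Lemma~\ref{subsolution}. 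For the required periodicities, I choose the orthogonal matrix $R$ with $Re_i=\zeta_i$ using the basis supplied by Lemma~\ref{change-basis}; then $X=R(\xi+ct,y)^{T}$ is invariant modulo $\mathbb Z^N$ under the shifts $t\mapsto t+\tau_1/c$ and $y\mapsto y+\tau$, so $\psi^\pm$ (hence $\overline\varphi$ and $\underline\varphi$) are $(\tau_1/c,\tau)$-periodic. Finally, for $a>a_0$ the Dirichlet data in \eqref{equation-strip} coincide with $\overline\varphi(-a,\cdot)=1$ and $\underline\varphi(a,\cdot)=\psi^-(a,\cdot)$.

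Next, fix $M>0$ so large that $s\mapsto\widetilde f(\xi,t,y,s)+Ms$ is non-decreasing on $[0,1]$ uniformly in $(\xi,t,y)$, and define an operator $T$ by $T[\phi]=\psi$, where $\psi$ is the unique $(t,y)$-periodic element of $C^{2,1,2}(\overline{Q_a})$ solving the linear problem
$$
\psi_t-\dive_{\xi,y}\bigl(\widetilde A\,\nabla_{\xi,y}\psi\bigr)-c\psi_\xi+M\psi=\widetilde f(\cdot,\phi)+M\phi\text{ in }Q_a,
$$
with $\psi(-a,\cdot)=1$ and $\psi(a,\cdot)=\psi^-(a,\cdot)$. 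Unique solvability of this linear periodic-Dirichlet problem on the compact base $Q_a$ follows either from viewing the Poincar\'e map of the corresponding initial-Dirichlet problem as a strict contraction (thanks to the coercive zeroth-order term $M$) or from Fredholm theory together with classical Ladyzhenskaya--Solonnikov--Ural'tseva regularity. The parabolic comparison principle for the operator $\partial_t-\dive(\widetilde A\nabla\cdot)-c\partial_\xi+M$ shows $T$ is order-preserving. Setting $\varphi_0=\overline\varphi$ and $\varphi_{n+1}=T[\varphi_n]$, the supersolution property of $\overline\varphi$ gives $\varphi_1\le\varphi_0$ and the subsolution property of $\underline\varphi$ gives $T[\underline\varphi]\ge\underline\varphi$; by monotonicity of $T$ one obtains a non-increasing sequence satisfying $\underline\varphi\le\varphi_n\le\overline\varphi$. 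Uniform parabolic Schauder estimates, using the $C^{1+\alpha}$-regularity of $\widetilde A$ and the $C^{1+\theta}$-regularity of $\widetilde f$ in its arguments, allow me to extract a limit in $C^{2,1,2}(\overline{Q_a})$, which is the desired solution $\varphi_a$.

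The main obstacle I expect is setting up the linear inverse $T$ correctly in the periodic class with Dirichlet data at $\xi=\pm a$; once the problem is pushed down to the compact cylinder $Q_a$ this reduces to a standard Fredholm-type question handled by classical parabolic theory. A minor technical subtlety is that $\overline\varphi$ and $\underline\varphi$ are only Lipschitz at the truncation sets $\{\psi^+=1\}$ and $\{\psi^-=0\}$, so the sub/supersolution inequalities must be read distributionally; however, max and min of classical sub/supersolutions retain these inequalities, and all the comparison arguments above go through without change.
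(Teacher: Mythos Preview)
Your proposal is correct and follows essentially the same route as the paper: set up the problem on the compact periodic base, invert the linear operator $\mathcal L+M$ in the time-periodic class (the paper does this explicitly via the Poincar\'e-map contraction you mention as one option), and run a monotone iteration between the ordered barriers $\underline\varphi\le\overline\varphi$. The only cosmetic differences are that the paper iterates upward from $\varphi_0=\underline\varphi$ rather than downward from $\overline\varphi$, and in place of your distributional reading of the truncated barriers it compares the iterates separately with $\psi^\pm$, $0$ and $1$ on the appropriate subsets (e.g.\ $\varphi_1\ge\psi^-$ on $\Omega^+$ and $\varphi_1\ge 0$ globally), which amounts to the same thing.
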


The proof of this lemma relies on arguments that are close to the ones used by Nadin in \cite[Lemma 4.3]{N2009}.

\begin{proof}
Let $\phi$ be the function defined by
$$
\phi(\xi,t,y)=\underline{\varphi}(a,t,y)\frac{\xi+a}{2a}
-\overline{\varphi}(-a,t,y)\frac{\xi-a}{2a}.
$$
This function is $\tau_1/c$-periodic in $t$ and $\tau$-periodic in $y$. Setting $v=\varphi-\phi$, Problem \eqref{equation-strip} is equivalent to
\begin{equation*}
 \begin{dcases}
   \mathcal{L}v=\widetilde{f}(\xi,t,y,v+\phi)-
   \mathcal{L}\phi~\text{ in }\Sigma_a, \\
   v\text{ is }\frac{\tau_1}{c}\text{-periodic in }t\text{ and }\tau\text{-periodic in }y,\\
 \forall(t,y)\in\mathbb{R}\times\mathbb{R}^{N-1},~
 v(\pm a,t,y)=0,
 \end{dcases}
\end{equation*}
where $\mathcal{L}$ denotes the parabolic operator given by
$$
\mathcal{L}\psi=\partial_t\psi-
\dive_{\xi,y}\big(\widetilde{A}(\xi,t,y)\nabla_{\xi,y}\psi\big)
-c\partial_{\xi}\psi.
$$
Set
\begin{equation}\label{g-v-def}
g(v)(\xi,t,y):=\widetilde{f}(\xi,t,y,v+\phi)-\mathcal{L}\phi,
\end{equation}
which is locally Lipschitz continuous with respect to $v$.

In order to use an iteration procedure, we need to prove that for some constant $M>0$, the operator $\mathcal{L}+M$ is invertible.
To that aim we take some $g\in C^0(\overline{\Sigma_a})$ such that $g$ is $\tau_{1}/c$-periodic in $t$ and $\tau$-periodic in $y$, and furthermore we define the set
$$
\Gamma^0_{\rm per}=\{v\in L^2\left((-a,a)\times C\right)\mbox{ such that } v\mbox{ is }\tau\mbox{-periodic in }y\}
$$
and endow it with the $L^2$-norm. Then it is a Banach space. Now, for all $v_0\in \Gamma^0_{\rm per}$, we define $(\xi,t,y)\mapsto v(\xi,t,y)\in C^1((0,+\infty);\Gamma^0_{\rm per})$ as the solution of
 \begin{equation*}
 \begin{dcases}
   \mathcal{L}v+Mv=g(\xi,t,y)&\mbox{ in }(-a,a)\times(0,\infty)
  \times\mathbb{R}^{N-1},\\
   v(-a,t,y)=v(a,t,y)=0&\mbox{ in }(0,\infty)
  \times\mathbb{R}^{N-1},\\
   v(\xi,0,y)=v_{0}(\xi,y)&\mbox{ in }(-a,a)\times\mathbb{R}^{N-1},
 \end{dcases}
\end{equation*}
and we investigate the Poincar\'{e} map
 \begin{equation*}
 \begin{dcases}
   \mathcal{T}:\Gamma^0_{\rm per}\to\Gamma^0_{\rm per}, \\
   v_{0}\mapsto v\left(\frac{\tau_1}{c}\right).
 \end{dcases}
\end{equation*}
Take $v_{01}, v_{02}\in\Gamma^0_{\rm per}$ and set $V(\xi,t,y)=(v_1(\xi,t,y)-v_2(\xi,t,y))e^{Mt}$. Then, $V$ satisfies
$$
V_t-\dive_{\xi,y}\big(\widetilde{A}(\xi,t,y)\nabla_{\xi,y}V\big)
-cV_{\xi}=0.
$$
Multiplying this equation by $V$ and integrating by parts over $(-a,a)\times(0,\frac{\tau_1}{c})
\times C$, we obtain that
\begin{align*}
&\frac{1}{2}\int\limits_{(-a,a)\times C}\left(V^{2}
\left(\xi,\frac{\tau_1}{c},y\right)-V^{2}(\xi,0,y)\right)
\mathrm{d}\xi\mathrm{d}y
=-\int_{-a}^a\int_{0}^{\frac{\tau_1}{c}}\int_C
\nabla_{\xi,y}V\widetilde{A}(\xi,t,y)\nabla_{\xi,y}V.
\end{align*}
Using the uniform elliptic conditions for $\widetilde{A}$, one gets
$$
\left\|v_2\left(\frac{\tau_1}{c}\right)-v_1\left(\frac{\tau_1}{c}\right)
\right\|_{L^2\left((-a,a)\times C\right)}\leq e^{-M\frac{\tau_1}{c}}\|v_2(0)-v_1(0)\|_{L^2\left((-a,a)\times C\right)}.
$$
Since $M$ and $\tau_1/c$ are positive, the map $\mathcal{T}$ is a contraction from $\Gamma^0_{\rm per}$ into itself. It then follows from Banach fixed point theorem that $\mathcal{T}$ admits a unique fixed point in $\Gamma^0_{\rm per}$. In other words, there exists a space-time periodic function $v$ such that $\mathcal{L}v+Mv=g$. Furthermore, from standard parabolic estimates and using the periodicity in $t$, we can obtain the point-wise boundary conditions:
$$
v(-a,t,y)=v(a,t,y)=0,~~\forall(t,y)\in\mathbb{R}\times\mathbb{R}^{N-1}.
$$

One can now carry out the iteration procedure. Recalling that the function $g(v)$ defined in \eqref{g-v-def} is locally Lipschitz continuous with respect to $v$, then we can fix a number $M>0$ large enough so that the mapping
\begin{align}\label{iteration}
s\in\mathbb{R}_+\mapsto\widetilde{f}(\xi,t,y,s)+Ms\mbox{ is increasing for all }
(\xi,t,y)\in\Sigma_a.
\end{align}
We inductively define the sequence $\{\varphi_n\}_{n\in\mathbb{N}_0}$ by
\begin{equation*}
 \begin{dcases}
 \varphi_0(\xi,t,y)=\underline{\varphi}(\xi,t,y),\\
   \mathcal{L}\varphi_{n+1}+M\varphi_{n+1}=
   \widetilde{f}(\xi,t,y,\varphi_{n})+M\varphi_{n}~\mbox{ in }\Sigma_a,\\
   \forall(t,y)\in\mathbb{R}\times\mathbb{R}^{N-1},~
   \varphi_{n+1}(-a,t,y)=\overline{\varphi}(-a,t,y),\\
   \forall(t,y)\in\mathbb{R}\times\mathbb{R}^{N-1},~
   \varphi_{n+1}(a,t,y)=\underline{\varphi}(a,t,y).
 \end{dcases}
\end{equation*}
For $n=0$, due to $0\leq\underline{\varphi}\leq1$, we have
\begin{equation*}
 \begin{dcases}
 \mathcal{L}\varphi_1+M\varphi_1=
 \widetilde{f}(\xi,t,y,\underline{\varphi})+M\underline{\varphi}\geq0
 ~\mbox{ in }\Sigma_a,\\
 \varphi_1\text{ is }\frac{\tau_1}{c}\text{-periodic in }t\text{ and }\tau\text{-periodic in }y,\\
  \forall(t,y)\in\mathbb{R}\times\mathbb{R}^{N-1},~
 \varphi_1(-a,t,y)=\overline{\varphi}(-a,t,y)>0,\\
 \forall(t,y)\in\mathbb{R}\times\mathbb{R}^{N-1},~
 \varphi_1(a,t,y)=\psi^-(a,t,y)>0.
 \end{dcases}
\end{equation*}
Since $M>0$, the weak parabolic maximum principle and the periodicity conditions yield $\varphi_1\geq0$ in $\overline{\Sigma_a}$. Indeed, if $\min_{\overline{\Sigma_a}}\varphi_1<0$, then the point $P_0:=(\xi_0,t_0,y_0)$ where $\varphi_1$ attains the minimum lies in $\Sigma_a$ due to $\varphi_1(\pm a,\cdot,\cdot)>0$. Since $\varphi_1$ is periodic in $t$, we have
$$\partial_t\varphi_1(P_0)=0,~
\nabla_{\xi,y}\varphi_1(P_0)=0_{\mathbb{R}^N}\mbox{ and }
D^2_{\xi,y}\varphi_1(P_0)\mbox{ is nonnegative-definite}.
$$
Using the uniform elliptic condition and symmetry of $\widetilde{A}$, one can further verify that
$$
 \dive_{\xi,y}\big(\widetilde{A}\nabla_{\xi,y}\varphi_1\big)(P_0)\geq0.
$$
Thanks to $M>0$, one gets $(\mathcal{L}+M)\varphi_1|_{P_0}<0$, whence a contradiction has been achieved. Using the periodicity of $\varphi_1$ in $t$ and $\varphi_1(\pm a,\cdot,\cdot)>0$, it then follows from the strong parabolic maximum principle that $\varphi_1>0$ in $\overline{\Sigma_a}$. Next, let us show that $\varphi_1\geq\varphi_0$. By Lemma \ref{subsolution}, we have
\begin{equation*}
 \begin{dcases}
\mathcal{L}(\varphi_1-\psi^-)+M(\varphi_1-\psi^-)=
\widetilde{f}(\xi,t,y,\psi^-)-\mathcal{L}\psi^-\geq0
&\mbox{ in }\Omega^+\cap\Sigma_a,\\
(\varphi_1-\psi^-)(\xi,t,y)\geq0
&\mbox{ on }\partial(\Omega^+\cap\Sigma_a),
 \end{dcases}
\end{equation*}
where
$$
\partial(\Omega^+\cap\Sigma_a)=
\left(\Omega^0\cap\overline{\Sigma_a}\right)\cup
\left(\Omega^+\cap\partial\Sigma_a\right)
$$
with
$$
\Omega^+=\{(\xi,t,y):\psi^->0\}~\text{ and }~
\Omega^0=\{(\xi,t,y):\psi^-\equiv0\}.
$$
Since $M>0$ and $(\varphi_1-\psi^-)$ is periodic in $t$ and $y$, it follows from the weak maximum principle that $\varphi_1\geq\psi^-$ in $\overline{\Omega^+}\cap\overline{\Sigma_a}$. Furthermore, as $0$ is a solution of Eq. \eqref{profile-equation} and $\varphi_1>0$ in $\overline{\Sigma_a}$, we further get
$$
\varphi_1\geq\underline{\varphi}=\max\{0,\psi^-\}
~\mbox{ in }\overline{\Sigma_a}.
$$
Now assume that $\underline{\varphi}\leq\varphi_1\leq\cdots\leq\varphi_{n-1}\leq\varphi_n$ in $\overline{\Sigma_a}$. Due to \eqref{iteration}, one has
$$
\mathcal{L}(\varphi_{n+1}-\varphi_n)+M(\varphi_{n+1}-\varphi_n)\geq0
~\mbox{ in }\Sigma_a.
$$
Hence the weak maximum principle ensures that $\varphi_{n+1}\geq\varphi_n$ in $\overline{\Sigma_a}$. Then we conclude that
$$
\underline{\varphi}=\varphi_0\leq\varphi_1\leq\cdots\leq\varphi_n\leq
\varphi_{n+1}\leq\cdots
\mbox{ in }\overline{\Sigma_a}.
$$
We proceed to show that $\varphi_n\leq\overline{\varphi}=\min\{1,\psi^+\}$ in $\overline{\Sigma_a}$ for all $n\in\mathbb{N}_0$. Notice first that $\varphi_0\leq\overline{\varphi}$. Assume for induction that $\varphi_n\leq\overline{\varphi}$ in $\overline{\Sigma_a}$. Due to \eqref{iteration} and by Lemma \ref{supersolution}, we obtain that
\begin{equation*}
 \begin{dcases}
\mathcal{L}(\psi^+-\varphi_{n+1})+M(\psi^+-\varphi_{n+1})\geq
\mathcal{L}\psi^+-\widetilde{f}(\xi,t,y,\psi^+)\geq0
~\mbox{ in }\Sigma_a,\\
\forall(t,y)\in\mathbb{R}\times\mathbb{R}^{N-1},~
(\psi^+-\varphi_{n+1})(\pm a,t,y)\geq0.
 \end{dcases}
\end{equation*}
Since $(\psi^+-\varphi_{n+1})$ is periodic in $t$ and $y$, it follows from the weak maximum principle that $\psi^+\geq\varphi_{n+1}$ in $\overline{\Sigma_a}$. Similarly, as $1$ is a solution of Eq. \eqref{profile-equation}, we can prove that $1\geq\varphi_{n+1}$ in $\overline{\Sigma_a}$. This indicates that
$\varphi_{n+1}\leq\overline{\varphi}=\min\{1,\psi^+\}
\mbox{ in }\overline{\Sigma_a}$.
Therefore, we get a nondecreasing sequence with respect to $n$ between $\underline{\varphi}$ and $\overline{\varphi}$, that is,
$$
\underline{\varphi}\leq\varphi_1\leq\cdots\leq\varphi_n\leq
\varphi_{n+1}\leq\cdots\leq\overline{\varphi}
~~\mbox{ in }\overline{\Sigma_a}.
$$
From the standard parabolic $L^p$-estimates and using the periodicity conditions, this sequence converges to some function $\varphi_a$ which is a strong solution of Problem \eqref{equation-strip}.
Furthermore, from the parabolic regularity theory up to the boundary, the solution $\varphi_a$ belongs to $C^{2,1,2}(\overline{\Sigma_a})$. This ends the proof of Lemma \ref{existence-lemma}.
\end{proof}
\subsubsection{Passage to the limit in the unbounded domains}
Let $\{a_n\}_{n\in\mathbb{N}}$ be a sequence such that $a_n\to+\infty$ as $n\to+\infty$ and let $\varphi_{a_n}$ be a solution of \eqref{equation-strip} with $a=a_n$. From standard parabolic estimates and Sobolev's injections, the sequence $\{\varphi_{a_n}\}$ converges (up to the extraction of some subsequence), for all $\beta\in[0,\alpha)$ in $C_{\rm loc}^{2+\beta,1+\beta/2,2+\beta}(\mathbb{R}\times\mathbb{R}\times
\mathbb{R}^{N-1})$,  to a function $\varphi$ which satisfies
\begin{equation*}
 \begin{dcases}
   \varphi_t-\dive_{\xi,y}\big(\widetilde{A}(\xi,t,y)\nabla_{\xi,y}
   \varphi\big)-c\varphi_{\xi}=\widetilde{f}(\xi,t,y,\varphi)~
   \mbox{ in }\mathbb{R}\times\mathbb{R}\times\mathbb{R}^{N-1},\\
   \varphi\text{ is }\frac{\tau_1}{c}\text{-periodic in }t\text{ and }\tau\text{-periodic in }y,\\
   \underline{\varphi}(\xi,t,y)\leq\varphi(\xi,t,y)\leq
   \overline{\varphi}(\xi,t,y),~~
   \forall(\xi,t,y)\in\mathbb{R}\times\mathbb{R}\times\mathbb{R}^{N-1}.
 \end{dcases}
\end{equation*}
\begin{proposition}\label{limits}
The solution $\varphi$ satisfies the following behaviour at $\xi=\pm\infty$
\begin{equation*}
 \begin{dcases}
  \lim_{\xi\to+\infty}\varphi(\xi,t,y)=0,\\
  \lim_{\xi\to-\infty}\varphi(\xi,t,y)=1,
 \end{dcases}
 \mbox{ uniformly for }(t,y)\in\mathbb{R}\times\mathbb{R}^{N-1},
\end{equation*}
as well as the following property
\begin{equation}\label{monotonicity-profile}
\varphi(\xi,t,y)\leq\varphi(\xi-ct^\prime,t+t^\prime,y),
~~\forall(\xi,t,y)\in\mathbb{R}\times\mathbb{R}\times\mathbb{R}^{N-1},
~\forall t^\prime>0.
\end{equation}
\end{proposition}
\begin{proof}
Note that the functions $\underline{\varphi}$ and $\overline{\varphi}$ defined in \eqref{super-sub-solution} converge to $0$ as $\xi\to+\infty$. Using the periodicity in $t$ and $y$, it immediately follows that $\varphi(\xi,t,y)\to0$ as $\xi\to+\infty$ uniformly for $(t,y)\in\mathbb{R}\times\mathbb{R}^{N-1}$. Furthermore, $\varphi$ satisfies the following exponential decay
\begin{equation}\label{decay-profile}
\varphi(\xi,t,y)\sim e^{-\lambda_c\xi}\Phi_{\lambda_c}(R(\xi+ct,y)^T)
\text{ as $\xi\to+\infty$ uniformly for $(t,y)\in\mathbb{R}^{N}$},
\end{equation}
where the function $\Phi_{\lambda_{c}}$ is the unique principal eigenfunction of Problem \eqref{periodic-eigenvalue-problem} in the sense of \eqref{normalization} corresponding to $\lambda=\lambda_{c}>0$.

It remains to prove the left limit and the property \eqref{monotonicity-profile}. Let us mention that the following proof is inspired from \cite{BMJ-TAMS}. To prove the limit of $\varphi$ at $-\infty$, we first show that
\begin{equation}\label{left-limit}
  \liminf_{\xi\to-\infty,\,y\in\overline{C}}\varphi(\xi,0,y)>0.
\end{equation}
Assume by contradiction that there exists a sequence $\{(\xi_n,y_n)\}\subset\mathbb{R}\times\overline{C}$ with $\xi_n\to-\infty$ as $n\to\infty$ such that $\varphi(\xi_n,0,y_n)\to0$ as $n\to\infty$. Then, for any given $\epsilon>0$ small, there exists $N_0>0$ sufficiently large such that $\varphi(\xi_n,0,y_n)\leq\epsilon$ for all $n\geq N_0$. From Harnack's inequality and standard parabolic estimates, we obtain that there exists some constant $M>0$ such that
\begin{align}\label{consequence-Harnack}
 \varphi(\xi_n,t,y)\leq M\epsilon,~~
 \norm{\nabla_{\xi,y}\varphi(\xi_n,t,y)}\leq M\epsilon,~~
 \forall(t,y)\in\left[0,\frac{\tau_1}{c}\right]\times\overline{C}.
\end{align}
Now, taking some $l>0$ and integrating the equation over $(\xi_n,l)\times(0,\tau_1/c)\times C$, one gets
\begin{align*}
  &\int_{\xi_n}^l\int_C\left[\varphi\left(\xi,\frac{\tau_1}{c},y\right)-\varphi
  (\xi,0,y)\right]\mathrm{d}\xi\mathrm{d}y
-\int_{0}^{\frac{\tau_1}{c}}\int_C\left(e_1\widetilde{A}(\xi,t,y)\nabla_{\xi,y}
\varphi\right)\Big|^l_{\xi_n}\mathrm{d}t\mathrm{d}y\\
-&\int_{\xi_n}^l\int_{0}^{\frac{\tau_1}{c}}\int_C\dive_{y}\big(B\widetilde{A}(\xi,t,y)
\nabla_{\xi,y}\varphi\big)\mathrm{d}\xi\mathrm{d}t\mathrm{d}y
-c\int_{0}^{\frac{\tau_1}{c}}\int_C\varphi(\xi,t,y)\big|^l_{\xi_n}
\mathrm{d}t\mathrm{d}y\\
&=\int_{\xi_n}^l\int_{0}^{\frac{\tau_1}{c}}\int_C\widetilde{f}(\xi,t,y,\varphi)
\mathrm{d}\xi\mathrm{d}t\mathrm{d}y
\end{align*}
where $B:=(e_2,\ldots,e_N)^T$ is a $(N-1)\times N$ matrix. Using the periodicity conditions in $t$ and $y$, we obtain that $\mathscr{L}(\xi_n,l)\equiv\mathscr{R}(\xi_n,l)$ with
\begin{align*}
\mathscr{L}(\xi_n,l):&=-\int_{0}^{\frac{\tau_1}{c}}\int_C\left(e_1\widetilde{A}\nabla_{\xi,y}
\varphi\right)\Big|^l_{\xi_n}\mathrm{d}t\mathrm{d}y
-c\int_{0}^{\frac{\tau_1}{c}}\int_C\left[\varphi(l,t,y)-
  \varphi(\xi_n,t,y)\right]\mathrm{d}t\mathrm{d}y,\\
\mathscr{R}(\xi_n,l):&=\int_{\xi_n}^l\int_{0}^{\frac{\tau_1}{c}}\int_C\widetilde{f}(\xi,t,y,\varphi)
  \mathrm{d}\xi\mathrm{d}t\mathrm{d}y.
\end{align*}
Next we prove that the equality $\mathscr{L}(\xi_n,l)\equiv\mathscr{R}(\xi_n,l)$ does no longer hold when $n\to\infty$ and $l\to+\infty$. On the one hand, due to $0\not\equiv\underline{\varphi}\leq\varphi\leq\overline{\varphi}\not\equiv1$ and since $\varphi$ is periodic in $t$, the strong maximum principle yields $0<\varphi<1$ for any $(\xi,t,y)\in\mathbb{R}\times\mathbb{R}\times\mathbb{R}^{N-1}$, whence $\widetilde{f}(\cdot,\cdot,\cdot,\varphi)>0$ by our hypothesis \eqref{reaction-hypothesis}. Letting $n\to\infty$ and $l\to+\infty$, we obtain that
$$
\mathscr{R}(-\infty,+\infty):=\int_{-\infty}^{+\infty}
\int_{0}^{\frac{\tau_1}{c}}\int_C\widetilde{f}
(\xi,t,y,\varphi)\mathrm{d}\xi\mathrm{d}t\mathrm{d}y>0.
$$
Then there exist $n_0$ and $l_0$ large enough such that
\begin{align}\label{RightHand}
\mathscr{R}(\xi_n,l)\geq \frac{\mathscr{R}(-\infty,+\infty)}{2}>0,
~~\forall n>n_0,~\forall l>l_0.
\end{align}
On the other hand, since $\varphi(l,\cdot,\cdot)\to0$ as $l\to+\infty$, the standard parabolic estimates imply that $\nabla_{\xi,y}\varphi(l,t,y)\to0_{\mathbb{R}^N}$ as $l\to+\infty$ uniformly in $(t,y)\in[0,\tau_1/c]\times\overline{C}$, and that $\varphi(l,t,y)$ and $\|\nabla\varphi(l,t,y)\|$ are bounded by $\epsilon$ for $l$ large enough. By \eqref{consequence-Harnack} and from the uniform elliptic condition \eqref{elliptic-condition}, we have
$$
\mathscr{L}(\xi_n,+\infty):=\int_{0}^{\frac{\tau_1}{c}}\int_C \left(e_1\widetilde{A}\nabla_{\xi,y}\varphi(\xi_n,t,y)+
c\varphi(\xi_n,t,y)\right)\mathrm{d}t\mathrm{d}y
\leq M|C|\left(\frac{\tau_1}{c}\Gamma+\tau_1\right)\epsilon.
$$
Then we deduce that $\mathscr{L}(\xi_n,+\infty)\to0$ as $n\to\infty$. Therefore, there exists some constant $M^{\prime}>0$ such that
\begin{align}\label{LeftHand}
\mathscr{L}(\xi_n,l)\leq M^{\prime}\epsilon,~~\forall n>n_0,~\forall l>l_0.
\end{align}
However, \eqref{RightHand} implies that $\mathscr{R}(\xi_n,l)$ is uniformly bounded from below by a positive constant. Thus, from \eqref{LeftHand} and using the fact that $\mathscr{L}(\xi_n,l)\equiv\mathscr{R}(\xi_n,l)$, a contradiction has been achieved.

One can now finish the proof of $\varphi(-\infty,\cdot,\cdot)=1$.
Note first that \eqref{left-limit} implies that there exist $\alpha_1>0$ and $K>0$ large enough such that $\varphi(\xi,0,y)\geq\alpha_1>0$ for any $(\xi,y)\in(-\infty,-K]\times\overline{C}$.
Since $\varphi$ is positive, then $\varphi(-K,t,y)$ is also bounded from below by a positive constant $\alpha_2$ for any $(t,y)\in[0,\tau_1/c]\times\overline{C}$. Set $\alpha:=\min\{\alpha_1,\alpha_2,1\}$ and consider the initial/boundary value problem posed for  $(\xi,t,y)\in(-\infty,-K]\times[0,\tau_1/c]\times\overline{C}$:
\begin{equation*}
 \begin{dcases}
   \varphi_t-\dive_{\xi,y}\big(\widetilde{A}(\xi,t,y)\nabla_{\xi,y} \varphi\big)-c\varphi_{\xi}=
   \widetilde{f}(\xi,t,y,\varphi)\geq0,\\
   \varphi(-K,t,y)\geq\alpha,\\
 \varphi(\xi,0,y)\geq\alpha.
 \end{dcases}
\end{equation*}
It then follows from the maximum principle that  $\varphi(\xi,t,y)\geq\alpha>0$ for all $(\xi,t,y)\in(-\infty,-K]\times[0,\tau_1/c]\times\overline{C}$. Next, set $v:=1-\varphi$ and $f(x,u):=(1-u)h(x,u)$. Then the function $v$ satisfies the equation
$$
v_t-\dive_{\xi,y}\big(\widetilde{A}(\xi,t,y)\nabla_{\xi,y} v\big)
-cv_{\xi}+\widetilde{h}(\xi,t,y,\varphi)v=0
$$
wherein we have set $\widetilde{h}(\xi,t,y,\varphi):=h(R(\xi+ct,y)^T,u)$. From our hypothesis \eqref{reaction-hypothesis}, we know that there exists some constant $h_0>0$ such that $h(X,u)\geq h_0>0$ for all $(X,u)\in\mathbb{T}^N\times[\alpha,1]$. Now, we consider the following function
$$
\widehat{v}(\xi,t,y):=\|v(\cdot,0,\cdot)\|_{\infty}e^{-h_0 t}
+\omega e^{\mu\xi},
~~\forall(\xi,t,y)\in(-\infty,-K]\times[0,+\infty)\times\overline{C}
$$
with a constant $\omega>0$ and we choose $\mu>0$ small enough so that $\mu^2+c\mu\leq h_0$.
Then, we obtain that
\begin{equation*}
 \begin{dcases}
   (\widehat{v}-v)_t-\dive_{\xi,y}\big(\widetilde{A}(\xi,t,y)\nabla_{\xi,y} (\widehat{v}-v)\big)-c(\widehat{v}-v)_{\xi}+\widetilde{h}(\xi,t,y,\varphi)
  (\widehat{v}-v)\geq0,\\
   (\widehat{v}-v)(-K,t,y)\geq\omega e^{-\mu K}-1,\\
 (\widehat{v}-v)(\xi,0,y)\geq0
 \end{dcases}
\end{equation*}
for all $(\xi,t,y)\in(-\infty,-K]\times[0,+\infty)\times\overline{C}$. Therefore, if $\omega\geq e^{\mu K}$, it then follows from the maximum principle that $v(\xi,t,y)\leq\widehat{v}(\xi,t,y)$ for all $(\xi,t,y)\in(-\infty,-K]\times[0,+\infty)\times\overline{C}$. Since $v$ is $\tau_1/c$-periodic in $t$ and $\tau$-periodic in $y$, by letting $t=t+n\tau_1/c$ and then passing to the limit $n\to+\infty$, we obtain that
$$
1-\varphi(\xi,t,y)=v(\xi,t,y)\leq \omega e^{\mu\xi},~~\forall\xi\leq-K, ~\forall(t,y)\in\mathbb{R}\times\mathbb{R}^{N-1},
$$
which concludes that $\lim_{\xi\to-\infty}\varphi(\xi,t,y)=1$ uniformly for $t\in\mathbb{R}$ and $y\in\mathbb{R}^{N-1}$.

Lastly, let us turn to the proof of \eqref{monotonicity-profile}.
To this end, we fix any $t^\prime>0$ and set
$$
w(\xi,t,y):=\varphi(\xi-ct^\prime,t+t^\prime,y)-\varphi(\xi,t,y).
$$
The function $w$ is bounded, $\tau_1/c$-periodic in $t$ and $\tau$-periodic in $y$.
Moreover, there exists a bounded and nonnegative function $b(\xi,t,y;t^\prime)$ such that
$$
w_t-\dive_{\xi,y}\big(\widetilde{A}(\xi,t,y)\nabla_{\xi,y}w\big)
-cw_{\xi}=\widetilde{f}_{\varphi}(\xi,t,y,b(\xi,t,y;t^\prime))w.
$$
Due to \eqref{decay-profile}, one has
\begin{equation}\label{decay_w0}
w(\xi,0,y)\sim (e^{c\lambda_c t^\prime}-1)
\Phi_{\lambda_{c}}(R(\xi,y)^T)e^{-\lambda_{c}\xi}>0
\text{ as }\xi\to\infty
\text{ uniformly for }y\in\mathbb{R}^{N-1}.
\end{equation}
Next we consider the following function
\begin{equation}\label{def_W}
W(\xi,t,y):=e^{-r_\delta t}\Phi_{\lambda_{c}+\delta}(R(\xi+ct,y)^T)
e^{-(\lambda_{c}+\delta)\xi},
\end{equation}
where the parameter $\delta$ is chosen as in Lemma \ref{subsolution} so that $r_\delta$ defined by \eqref{r_delta} is positive.
From the proof of Lemma \ref{subsolution}, we know that $W$ satisfies
$$
W_t-\dive_{\xi,y}\big(\widetilde{A}(\xi,t,y)\nabla_{\xi,y}W\big)
-cW_{\xi}=\widetilde{f}_{\varphi}(\xi,t,y,0)W.
$$
Observe from \eqref{decay_w0} and \eqref{def_W} that there exists a constant $\eta>0$ such that
$$
w(\xi,0,y)+\eta W(\xi,0,y)\geq0,~~\forall(\xi,y)\in\mathbb{R}\times
\mathbb{R}^{N-1}.
$$
Furthermore, one also has
\begin{align*}
(w+\eta W)_t&-\dive_{\xi,y}\big(\widetilde{A}\nabla_{\xi,y}
(w+\eta W)\big)-c(w+\eta W)_{\xi}-
\widetilde{f}_{\varphi}(\xi,t,y,b)(w+\eta W)\\
&=\left(\widetilde{f}_{\varphi}(\xi,t,y,0)-\widetilde{f}_{\varphi}
(\xi,t,y,b(\xi,t,y;t^\prime))\right)\eta W\geq0
\end{align*}
due to our hypotheses \eqref{reaction-hypothesis}. Consequently, the weak maximum principle ensures that
$$
(w+\eta W)(\xi,t,y)\geq0,~~\forall(\xi,t,y)\in\mathbb{R}
\times\mathbb{R}_+\times\mathbb{R}^{N-1}.
$$
Choose $s\in[0,\tau_1/c]$ and take $t=s+n\tau_1/c$.
Since the function $w$ is periodic in $t$, passing to the limit $n\to\infty$, we conclude from \eqref{def_W} that $w$ is nonnegative in $\mathbb{R}
\times\mathbb{R}\times\mathbb{R}^{N-1}$, whence \eqref{monotonicity-profile} follows.
This ends the proof of Proposition \ref{limits}.
\end{proof}
\subsection{Rational approximation to any direction of propagation}
This subsection is devoted to the proof of the existence of pulsating travelling fronts of \eqref{general-equation} propagating in an arbitrarily given direction $e\in\mathbb{S}^{N-1}$. Note first that in Subsection \ref{existence-rational-direction} we have shown that for each vector $\zeta\in\mathbb{Q}^N\cap\mathbb{S}^{N-1}$, there exists a periodic (in the last two variables $(t,y)$) solution $\varphi(\xi,t,y;\zeta)$ of Problem \eqref{rational-direction-equation}. Coming back to the original variables, it equivalently says that Problem \eqref{general-equation} admits a pulsating travelling front $u(t,x;\zeta)$ propagating in the direction $\zeta$ with the speed $c>c^*(\zeta)$ which satisfies
\begin{equation}\label{rational-approximation-pulsation}
 \begin{dcases}
  u\left(t+\frac{k \cdot \zeta}{c},x\right)=u(t,x-k),~~\forall
   (t,x,k)\in\mathbb{R}\times\mathbb{R}^{N}\times\mathbb{Z}^{N},\\
   \lim\limits_{r\to+\infty}u(t,r\zeta+y)=0~~\mbox{and}~~
   \lim\limits_{r\to-\infty}u(t,r\zeta+y)=1,
 \end{dcases}
\end{equation}
where the limits hold locally uniformly for $t\in\mathbb{R}$ and uniformly with respect to $y\in \zeta^\bot$. Moreover, one has
\begin{align}\label{rational-approximation-boundedness}
  0\leq\underline{u}(t,x;\zeta)\leq u(t,x;\zeta)\leq\overline{u}(t,x;\zeta)\leq1,~~\forall(t,x)
\in\mathbb{R}\times\mathbb{R}^{N-1},
\end{align}
where the functions $\underline{u}$ and $\overline{u}$ are respectively of the form:
\begin{gather*}
\underline{u}(t,x;\zeta)=\max\left\{0,\Phi_{\lambda_c(\zeta),\zeta}(x)
e^{-\lambda_c(\zeta)(x\cdot\zeta-ct)}-K\Phi_{\lambda_c(\zeta)+\delta,\zeta}(x)
e^{-(\lambda_c(\zeta)+\delta)(x\cdot\zeta-ct)}\right\},\\
\overline{u}(t,x;\zeta)=\min\left\{1,\Phi_{\lambda_c(\zeta),\zeta}(x)
e^{-\lambda_c(\zeta)(x\cdot\zeta-ct)}\right\}.
\end{gather*}
Furthermore, the pulsating travelling front is increasing in time, namely
\begin{proposition}\label{monotonicity-front-time}
   The function $u(t,x;\zeta)$ satisfies $\partial_t u(t,x;\zeta)>0$ for all $(t,x)\in\mathbb{R}\times\mathbb{R}^N$.
\end{proposition}
\begin{proof}
Recalling the definition of the profile $\varphi(\xi,t,y)$ as in \eqref{def-phi} and by \eqref{monotonicity-profile}, one has
$$
u(t,R(x\cdot\zeta,y)^T)=\varphi(x\cdot\zeta-ct,t,y)\leq
\varphi(x\cdot\zeta-ct-ct^\prime,t+t^\prime,y)
=u(t+t^\prime,R(x\cdot\zeta,y)^T),
$$
namely
$u(t,x;\zeta)\leq u(t+t^\prime,x;\zeta)$ for all $(t,x)
\in\mathbb{R}\times\mathbb{R}^N$ and for any $t^\prime>0$.
We conclude that $u(t,x;\zeta)$ is nondecreasing in $t$. Furthermore, applying the strong maximum principle to the equation satisfied by $\partial_t u$ yields $\partial_t u(t,x;\zeta)>0$ for all $(t,x)\in\mathbb{R}\times\mathbb{R}^N$.
\end{proof}
We can also refer to Hamel \cite{Hamel2008} (see also \cite{BH2002,BHR2005-2}) for the other proof of the monotonicity of monostable pulsating fronts with respect to time, which is more general and robust.

Now, in order to prove the existence of pulsating travelling fronts of Problem \eqref{general-equation} in any given directions in $\mathbb{S}^{N-1}$, we need the following well-known result:
\begin{lemma}[See \cite{DG2021}]
  The set $\mathbb{Q}^N\cap\mathbb{S}^{N-1}$ is dense in $\mathbb{S}^{N-1}$.
\end{lemma}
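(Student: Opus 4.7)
The plan is to invoke the classical stereographic projection, which provides an explicit parametrization of $\mathbb{S}^{N-1}$ minus one point that sends rationals to rationals.

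First, I would fix the ``north pole'' $e_N = (0,\ldots,0,1) \in \mathbb{S}^{N-1}$ and consider the inverse stereographic projection $\sigma : \mathbb{R}^{N-1} \to \mathbb{S}^{N-1} \setminus \{e_N\}$ defined by
\begin{equation*}
\sigma(y) = \left( \frac{2y_1}{|y|^2+1}, \ldots, \frac{2y_{N-1}}{|y|^2+1}, \frac{|y|^2-1}{|y|^2+1} \right).
\end{equation*}
This map is a homeomorphism onto its image. The crucial observation is that if $y \in \mathbb{Q}^{N-1}$ then $|y|^2 \in \mathbb{Q}$, so every coordinate of $\sigma(y)$ is rational; in other words, $\sigma(\mathbb{Q}^{N-1}) \subset \mathbb{Q}^N \cap \mathbb{S}^{N-1}$.

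Second, because $\mathbb{Q}^{N-1}$ is dense in $\mathbb{R}^{N-1}$ and $\sigma$ is continuous, the image $\sigma(\mathbb{Q}^{N-1})$ is dense in $\sigma(\mathbb{R}^{N-1}) = \mathbb{S}^{N-1} \setminus \{e_N\}$. To cover the exceptional point $e_N$ itself, I would pick any rational sequence $y_n \in \mathbb{Q}^{N-1}$ with $|y_n| \to \infty$ and observe directly from the explicit formula that $\sigma(y_n) \to e_N$. Combining the two cases yields density of $\mathbb{Q}^N \cap \mathbb{S}^{N-1}$ in all of $\mathbb{S}^{N-1}$; the degenerate case $N=1$ is trivial since $\mathbb{S}^0 = \{-1,+1\} \subset \mathbb{Q}$.

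I do not anticipate any genuine obstacle in this argument; the only minor subtlety is the separate treatment of the removed pole $e_N$, which is handled by the behaviour of $\sigma$ at infinity. The entire proof rests on the algebraic fact that the standard stereographic formula is a rational function of its input.
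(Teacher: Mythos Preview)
Your argument via stereographic projection is correct and complete. The paper itself does not supply a proof of this lemma: it simply cites \cite{DG2021} (and notes that a related statement appears in \cite[Proposition~4.1]{BHN2008}), so there is nothing to compare against at the level of method. Your approach is in fact the classical one and is essentially how such results are usually proved; the only point worth double-checking is that the image of a dense set under a homeomorphism is dense in the image, which you have used correctly, and your separate treatment of the pole $e_N$ is clean.
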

A similar result can be found in \cite[Proposition 4.1]{BHN2008}. Thanks to this lemma, for any given $e\in\mathbb{S}^{N-1}$, there exists a sequence $\{\zeta_m\}_{m\in\mathbb{N}}\subset\mathbb{Q}^N\cap\mathbb{S}^{N-1}$ such that $\zeta_m\to e$ as $m\to\infty$. As stated above, we have proved that for each vector $\zeta_m\in\mathbb{Q}^N\cap\mathbb{S}^{N-1}$ and for each speed $c>c^*(\zeta_m)$, there exists a pulsating travelling front $u(t,x;\zeta_m)$ of \eqref{general-equation} satisfying \eqref{rational-approximation-pulsation} and \eqref{rational-approximation-boundedness}.
Consider now a sequence
$u_m(t,x):=u(t,x;\zeta_m)$ which satisfies
$$
\partial_t u_m(t,x)>0\mbox{ and }
\underline{u}(t,x;\zeta_m)\leq u_m(t,x)
\leq\overline{u}(t,x;\zeta_m),~\forall
(t,x)\in\mathbb{R}\times\mathbb{R}^N,
~\forall m\in\mathbb{N}.
$$
Since $\{u_m\}_{m\in\mathbb{N}}$ is uniformly bounded with respect to $m$, passing to the limit $m\to\infty$ and from standard parabolic estimates, we obtain that the sequence $u_m$ converges (up to the extraction of some subsequence) locally uniformly with respect to $(t,x)\in\mathbb{R}\times\mathbb{R}^N$, along with its derivative, to some function $u_\infty$ which is an entire classical solution of \eqref{general-equation}. Moreover, the convergence above also implies that $u_\infty$ satisfies
\begin{equation*}
  \begin{dcases}
    u_{\infty}\left(t+\frac{k\cdot e}{c},x\right)=u_{\infty}(t,x-k),&\forall
   (t,x,k)\in\mathbb{R}\times\mathbb{R}^{N}\times\mathbb{Z}^{N}, \\
    \partial_t u_\infty(t,x)\geq0,&\forall
   (t,x)\in\mathbb{R}\times\mathbb{R}^{N}.
  \end{dcases}
\end{equation*}
By Propositions \ref{principal-eigenelements}-\ref{minimal-speed}, one can easily check that $\underline{u}$ and $\overline{u}$ depend continuously on the direction of propagation with respect to the uniform topology, and $c^*(\zeta_m)\to c^*(e)$ as $m\to\infty$. Therefore, for any $c>c^*(e)>0$, we have
\begin{align}\label{any-direction-solution-boundedness}
 0\leq\underline{u}(t,x;e)\leq u_{\infty}(t,x)\leq\overline{u}(t,x;e)\leq1,
~~\forall(t,x)\in\mathbb{R}\times\mathbb{R}^N.
\end{align}

In order to obtain our result, we have to prove that $u_\infty(t,re+y)\to0$ as $r\to+\infty$ and $u_\infty(t,re+y)\to1$ as $r\to-\infty$ locally uniformly in $t\in\mathbb{R}$ and uniformly in $y\in e^\perp$. From the pulsating properties of $u_\infty$ and the positivity of speed $c$, it is equivalent to prove that $u_\infty(t,x)\to0$ as $t\to-\infty$ and $u_\infty(t,x)\to1$ as $t\to+\infty$ locally uniformly in $x$. The former is immediately deduced from \eqref{any-direction-solution-boundedness} and the fact that $\overline{u}\not\equiv1$ is nondecreasing with respect to $t$. Now, using the pulsating properties of $u_\infty$ and estimate \eqref{any-direction-solution-boundedness}, as well as Proposition \ref{monotonicity-front-time}, we obtain that $u_\infty$ converges in $C^2(\mathbb{T}^N)$ as $t\to+\infty$ to a $\mathbb{Z}^N$-periodic function $p$ which satisfies
\begin{equation*}
  \begin{dcases}
  \dive(A(x)\nabla p)+f(x,p)=0~\text{  in }\mathbb{T}^N,\\
  0\leq p(x)\leq1, ~\forall x\in\mathbb{T}^N.
  \end{dcases}
\end{equation*}
From our hypotheses \eqref{reaction-hypothesis} on $f$, it can be only $0$ or $1$. If $p\equiv0$, then $u_{\infty}(t,x)=0$ for all $(t,x)\in\mathbb{R}\times\mathbb{R}^N$, which is impossible due to the estimate \eqref{any-direction-solution-boundedness} and $\underline{u}(t,x;e)\not\equiv0$. This shows that $u_\infty(t,x)\to1$ as $t\to+\infty$ locally uniformly in $x$. Finally, the strong maximum principle yields that $u_\infty$ is increasing in $t$.
\subsection{Existence of a pulsating front with minimal speed}
In this subsection we prove the existence of pulsating travelling front with critical speed. To do so, let $\{c_n\}_{n\in\mathbb{N}}$ be a sequence of speeds such that $c_n>c^*(e)>0$ for each $n\in\mathbb{N}$ and $c_n\searrow c^*(e)$ as $n\to\infty$. For each direction $e\in\mathbb{S}^{N-1}$, we know that there exists a pulsating travelling front $u=u_n(t,x)$ of \eqref{general-equation} with the speed $c=c_n$ such that
$$
0\leq\underline{u}_n(t,x)\leq u_n(t,x)
\leq\overline{u}_n(t,x)\leq1,~~
\forall(t,x)\in\mathbb{R}\times\mathbb{R}^N,~\forall n\in\mathbb{N}.
$$
Since $u_n(t,x)\to0$ (resp. $u_n(t,x)\to1$) as $t\to-\infty$ (resp. $t\to+\infty$) locally in $x$ and $u_n$ is increasing in $t$ (so that $0<u_n<1$), up to a translation in $t$, we can normalize the sequence of solutions such that $
\max_{x\in\mathbb{T}^N}u_n(0,x)=\frac{1}{2}$ for each $n\in\mathbb{N}$.  Passing to the limit $n\to\infty$ and from standard parabolic estimates, we obtain that the sequence $u_n$ converges (up to the extraction of a subsequence) in $C^{1,2}_{\rm loc}(\mathbb{R}\times\mathbb{R}^N)$ to an entire solution $u^*$ of \eqref{general-equation}. Moreover, the convergence also implies that $u^*$ satisfies
\begin{equation*}
 \begin{dcases}
 u^*\left(t+\frac{k\cdot e}{c},x\right)=u^*(t,x-k),~~\forall
   (t,x,k)\in\mathbb{R}\times\mathbb{R}^{N}\times\mathbb{Z}^{N}, \\
0\leq u^*(t,x)\leq\overline{u}^*(t,x):=
\min\left\{1,\Phi_{\lambda^*}(x)
e^{-\lambda^*\left(x\cdot e-c^*(e)t\right)}\right\},
~\forall(t,x)\in\mathbb{R}\times\mathbb{R}^N,\\
\partial_t u^*\geq0,~~\max_{x\in\mathbb{T}^N}u^*(0,x)=\frac{1}{2},
 \end{dcases}
\end{equation*}
where the second assertion holds due to Lemma \ref{supersolution} and Proposition \ref{minimal-speed}. Since $\overline{u}^*\not\equiv1$ is nondecreasing in $t$, it then follows that $u^*$ converges to $0$ as $t\to-\infty$ locally uniformly in $x\in\mathbb{R}^N$. Now, using the pulsating properties and the monotonicity of $u^*$ with respect to $t$, we obtain that $u^*$ converges in $C^2(\mathbb{T}^N)$ as $t\to+\infty$ to a periodic stationary state $0\leq p^*(x)\leq1$ of \eqref{general-equation}. Furthermore, the hypotheses \eqref{reaction-hypothesis} on $f$ imply that $p^*\equiv0$ or $p^*\equiv1$. But $p^*$ is nontrivial due to $\max_{x\in\mathbb{T}^N}u^*(0,x)=\frac{1}{2}$.
This shows that $u^*(t,x)\to1$ as $t\to+\infty$ locally uniformly in $x\in\mathbb{R}^N$.
Further, the positivity of $c^*(e)$ and the pulsating properties of $u^*$ with respect to $(t,x)$ imply that the limits of the front as in \eqref{PTF} hold.
The existence of a pulsating travelling front with speed $c^*(e)$ is proved.
\section*{Appendix}
Here we aim to find an orthogonal basis of $\mathbb{R}^N$ whose first vector is a rational point on $\mathbb{S}^{N-1}$ and then we give a new representation for any $N$-dimensional integer vector under this orthogonal basis.
\begin{proof}[Proof of Lemma \textup{\ref{change-basis}}]
Firstly, if $N=2$, let $\zeta_1=\zeta=(\frac{m_1}{n_1},\frac{m_2}{n_2})\in\mathbb{Q}^{2}\cap\mathbb{S}^{1}$ be given for $m_i\in\mathbb{Z}$ and $n_i\in\mathbb{Z}\backslash\{0\}$ and take some vector
$\zeta_2\in \zeta^{\perp}_{1}\backslash\{0_{\mathbb{R}^2}\}$. Note that
$\zeta^{\perp}_{1}=\mathbb{R}\zeta_2=\mathbb{R}(-\frac{m_2}{n_2},\frac{m_1}{n_1})$.
Therefore, for any given $k\in\mathbb{Z}^{2}$, there exist $p,\,q\in\mathbb{Z}$ such that
$k=\tau_{1}p\zeta_{1}+\tau_{2}q\zeta_2$ wherein $\tau_1=|n_1 n_2|^{-1}$ and $\tau_2=(|n_1 n_2|\cdot\lambda)^{-1}$ with $\lambda:=\|\zeta_2\|$. In particular, if $\zeta_1=e_1$ or $e_2$, it is sufficient to set $\zeta_2=e_2$ or $e_1$ and to take $\tau_1=\tau_2=1$. Consequently, we can rewrite
$$
\mathbb{Z}^{2}=\tau_1\mathbb{Z}\zeta_{1}\oplus\tau_2\mathbb{Z}\zeta_{2}
=\tau_1\mathbb{Z}\zeta_{1}\perp\tau_2\mathbb{Z}\zeta_{2}
$$
under the orthogonal basis $\{\zeta_1,\zeta_2\}$ of $\mathbb{R}^2$.

Next, if $N\geq3$, let $\zeta=(a_1,\ldots,a_N)\in\mathbb{Q}^{N}\cap\mathbb{S}^{N-1}$ be given and set
$$
I:=\{1\leq i\leq N\mid a_i\neq0\}~\mbox{ and }~
I^c:=\{1\leq i\leq N\mid a_i=0\}.
$$
Let us make the following decomposition
$$
\mathbb{Z}^N=\begin{pmatrix}{\mathbb{Z}^d}\\{0_{\mathbb{R}^{N-d}}}\end{pmatrix}
\bigoplus
\begin{pmatrix}{0_{\mathbb{R}^d}}\\{\mathbb{Z}^{N-d}}\end{pmatrix}
\mbox{ with }d=\card(I)
$$
and set $\beta_1:=(b_1,\ldots,b_d)$ wherein $b_j\in\{a_i\}_{i\in I}$ for all $j\in\{1,\ldots,d\}$ so that $\beta_1\in\mathbb{Q}^d\cap\mathbb{S}^{d-1}$. Now, we carry out a Gram-Schmidt process on the collection $\{\beta_1,e_1,\cdots,e_{d-1}\}$ of linearly independent vectors where $e_i$ denotes the standard coordinate vectors in $\mathbb{R}^d$. Then,
\begin{align*}
  &\beta_1=(b_1,\ldots,b_d),~~\|\beta_1\|=1,\\
  &\beta_2=\left(1-b_1^2,-b_1b_2,\ldots,-b_1b_d\right),~~
  \|\beta_2\|^2=1-b_1^2,\\
  &\beta_3=\frac{1}{\sum_{i=2}^{d}b_i^2}\left(0,\sum_{i=3}^{d}b_i^2,
  -b_2b_3,\ldots,-b_2b_d\right),~~\|\beta_3\|^2=
  \frac{\sum_{i=3}^{d}b_i^2}{\sum_{i=2}^{d}b_i^2},\\
  &\cdots\cdots\quad\cdots\cdots \\
  &\beta_{\ell}=\frac{1}{\sum_{i=\ell-1}^{d}b_i^2}
  \left(\underbrace{0,\ldots,0}_{\ell-2},
  \sum_{i=\ell}^{d}b_i^2,-b_{\ell-1}b_{\ell},\ldots,-b_{\ell-1}b_d\right),
  ~~\|\beta_{\ell}\|^2=\frac{\sum_{i=\ell}^{d}b_i^2}
  {\sum_{i=\ell-1}^{d}b_i^2},\\
  &\cdots\cdots\quad\cdots\cdots \\
  &\beta_{d}=\frac{1}{b_{d-1}^2+b_d^2}\left(0,\ldots,0,b_d^2,-b_{d-1}b_d\right),
  ~~\|\beta_d\|^2=\frac{b_d^2}{b_{d-1}^2+b_d^2}.
\end{align*}
For any given $k=(k_1,\ldots,k_d)\in\mathbb{Z}^d$, we are looking for a few suitable constants $\tau_i>0$ for all $i\in\{1,\ldots,d\}$ such that $k$ can be rewritten as $k=\sum^{d}_{i=1}\tau_i p_i\beta_i$ for $p_i\in\mathbb{Z}$. To that aim let us compute
\begin{align*}
  &k_1=\tau_1 p_1 b_1+\tau_2p_2(1-b_1^2),\\
  &k_2=\tau_1 p_1 b_2+\tau_2p_2(-b_1b_2)+
  \tau_3p_3\sum_{i=3}^{d}b_i^2\left(\sum_{i=2}^{d}b_i^2\right)^{-1},\\
  &\ldots\ldots\quad\ldots\ldots \\
  &k_\ell=\tau_1p_1b_\ell+\cdots+\tau_\ell p_\ell
  (-b_{\ell-1}b_\ell)\left(\sum_{i=\ell-1}^{d}b_i^2\right)^{-1}
  +\tau_{\ell+1}p_{\ell+1}
  \sum_{i=\ell+1}^{d}b_i^2\left(\sum_{i=\ell}^{d}b_i^2\right)^{-1},\\
  &\ldots\ldots\quad\ldots\ldots \\
  &k_d=\tau_1p_1b_d+\tau_2p_2(-b_1b_d)+\cdots+\tau_dp_d(-b_{d-1}b_d)
  \left(b_{d-1}^2+b_{d}^2\right)^{-1}.
\end{align*}
Since all of $\beta_i$ are orthogonal to each other, we have
\begin{align*}
  &\tau_1p_1=\sum_{i=1}^{d}b_ik_i,~~~
  \tau_2p_2\sum_{i=2}^{d}b_i^2=k_1\sum_{i=2}^{d}
  b_i^2-b_1\sum_{i=2}^{d}b_ik_i,
  ~~~~\ldots\ldots,\\
  &\tau_\ell p_\ell\sum_{i=\ell}^{d}b_i^2=k_{\ell-1}
  \sum_{i=\ell}^{d}b_i^2-b_{\ell-1}\sum_{i=\ell}^{d}
  b_ik_i,~~~\ldots\ldots,~~~
  \tau_dp_db_d=b_dk_{d-1}-b_{d-1}k_d.
\end{align*}
Note that $\beta_1=(b_1,\ldots,b_d)\in\mathbb{Q}^{d}\cap\mathbb{S}^{d-1}$. Let us set $b_i:=\frac{m_i}{n_i}$ with $m_i,n_i\in\mathbb{Z}\backslash\{0\}$ for all $i\in\{1,\ldots,d\}$. A straightforward computation yields that we can choose
\begin{align*}
  &\tau_1=\left(\prod_{i=1}^{d}|n_i|\right)^{-1},~~
\tau_2=\left[(n_1^2-m_1^2)\prod_{i=2}^{d}|n_i|\right]^{-1},~~\ldots,\\
&\tau_\ell=\left[|n_{\ell-1}|\sum_{i=\ell}^{d}\left(\frac{m_i}{n_i}\right)^2
\prod_{i=\ell}^{d}n^2_i\right]^{-1},~~\ldots,~~\tau_d=|m_dn_{d-1}|^{-1}
\end{align*}
such that $k=\sum^{d}_{i=1}\tau_i p_i\beta_i$ and $p_i\in\mathbb{Z}$ where the choice for each $\tau_i$ is not necessarily unique. Now, for all $1\leq i\leq d$, we set $\eta_i:=\lambda_i\beta_i$ with some $\lambda_i\in\mathbb{R}\backslash\{0\}$. As argued above, we know that $\eta_1^{\perp}=\textup{span}\{\eta_i\}_{i=2}^{d}$ and for any given $k\in\mathbb{Z}^d$, there exist $d$ constants $\tau^{\prime}_i>0$ such that
$$
k=\sum^{d}_{i=1}\tau^{\prime}_i p_i\eta_i, ~~p_i\in\mathbb{Z}.
$$
In particular, if $\lambda_i=1/\|\beta_i\|$ for each $i\in\{1,\ldots,d\}$, then $\{\eta_i\}_{i=1}^d$ forms an orthonormal basis of $\mathbb{R}^d$. This means that any integer vector in $\mathbb{R}^d$ under the canonical basis $\{e_i\}_{i=1}^d$ can be expressed uniquely as a linear combination of the orthogonal basis $\{\eta_i\}_{i=1}^d$. In other words, we can always find an orthogonal basis $\{\eta_i\}_{i=1}^d$ of $\mathbb{R}^d$ and determine $d$ constants $\tau^{\prime}_i>0$ such that $\eta_1=\beta_1\in\mathbb{Q}^{d}\cap\mathbb{S}^{d-1}$ and
\begin{align}\label{proof-change-basis}
  \mathbb{Z}^d=\bigoplus_{i=1}^d\tau^{\prime}_i\mathbb{Z}\eta_i,
~~\eta_{i}\perp\eta_{j}\,(i\neq j), ~~i,j=1,\ldots,d.
\end{align}

Finally, $\{\eta_i\}_{i=1}^d$ can be easily expanded to a collection $\{\zeta_\ell\}_{\ell\in I}$ of vectors orthogonal to each other in $\mathbb{R}^N$. Therefore, the set $\{\zeta_\ell\}_{\ell\in I}\cup\{e_m\}_{m\in I^c}$ forms an orthogonal basis of $\mathbb{R}^N$ and $\zeta=\zeta_{i_1}$ where $i_1\in I$ denotes the first index such that $a_i\neq0$ for all $i\in\{1,\ldots,N\}$. Taking $\tau_\ell=\tau^{\prime}_\ell$ for all $\ell\in I$ and taking $\tau_m=1$ for all $m\in I^c$, possibly along some rearrangement of subscripts, we can obtain from \eqref{proof-change-basis} that $\zeta_1=\zeta\in\mathbb{Q}^N\cap\mathbb{S}^{N-1}$ and
$$
\mathbb{Z}^N=\begin{pmatrix}{\mathbb{Z}^d}\\{0_{\mathbb{R}^{N-d}}}\end{pmatrix}
\bigoplus
\begin{pmatrix}{0_{\mathbb{R}^d}}\\{\mathbb{Z}^{N-d}}\end{pmatrix}=
\bigoplus_{i=1}^N\tau_i\mathbb{Z}\zeta_i,
  ~~\zeta_{i}\perp\zeta_{j}~ (i\neq j),~~i,j=1,\ldots,N.
$$
In particular, if $d=1$, namely $\zeta=e_{i_0}$ for some $i_0$, it is sufficient to choose $\zeta_1=e_{i_0}$ and  $\zeta_i\in\{e_j\}_{j=1}^N\backslash\{e_{i_0}\}$ for all $i\in\{2,\ldots,N\}$, and to take $\tau_\ell=1$ for all $\ell\in\{1,\ldots,N\}$.
This ends the proof of Lemma \ref{change-basis}.
\end{proof}

\end{document}